\shorttitle{The Asymptotic Size of The Largest Component in RGGs} 
\begin{document}

\title{The Asymptotic Size of The Largest Component in Random Geometric Graphs with some applications} 


\authorone{Ge Chen}
\addressone{National Center for Mathematics and Interdisciplinary Sciences \& Key Laboratory of Systems and Control, Academy of Mathematics and Systems Science,
Chinese Academy of Sciences, Beijing, 100190, P.R.China. Email: chenge@amss.ac.cn.} 
\authortwo[Academy of Mathematics and Systems Science,
CAS]{Changlong Yao} 
\addresstwo{Institute of Applied Mathematics, Academy of Mathematics and Systems Science,
Chinese Academy of Sciences, Beijing, 100190, P.R.China. Email: deducemath@126.com.}
\authorthree[Graduate University of
Chinese Academy of Sciences]{Tiande Guo} 
\addressthree{School of Mathematical Science, Graduate University of
Chinese Academy of Sciences, Beijing, 100049, P.R.China. Email: tdguo@gucas.ac.cn.}

\begin{abstract}
For the size of the largest component in a supercritical random geometric graph,
this paper estimates its expectation which tends to a polynomial on a rate of exponential decay, and  sharpens its
 asymptotic  result with a central limit theory. Similar results can be obtained for the size
of biggest open cluster, and for the number of open clusters of
percolation on a box, and so on.
\end{abstract}

\keywords{Random geometric graph, percolation, the largest
component, Poisson Boolean
model, the number of open clusters} 

\ams{60K35}{60D05;82B43;} 

\section{Introduction} 
The size of the largest component is a basic property for random
geometric graphs (RGGs) and has attracted much interest during the
past years, including both theoretical
studies~\cite{penrose1995}\cite{penrose1996}\cite{penrose2001}\cite{p10}
and various
applications~\cite{Glauche2003}\cite{p17}\cite{p16}\cite{Pishro2009}.
This paper firstly investigates the
asymptotic size of the largest component of RGG in the supercritical
case.

Given a set $\mathcal {X}\subset\mathbb{R}^d$, let $G(\mathcal
{X};r)$ denote the undirected graph with vertex set $\mathcal {X}$
and with undirected edges which connect all those pairs $\{X,Y\}$ with
$\parallel Y-X\parallel\leq r$, where $\|\cdot\|$ denotes the
Euclidean norm ($l_2-norm$). The basic model of RGGs can be
formulated as $G(\mathcal {X}_n;r_n)$, where $\mathcal{X}_n$ denotes
 $n$ points which are independently and uniformly distributed in a
$d$-dimensional unit cube. To overcome the lack of spatial
independence for the binomial point process $\mathcal{X}_n$, the
model of continuum percolation must be introduced. Following Section
1.7 in  \cite{p10}, let $\mathcal{H}_{\lambda}$ be a homogeneous
Poisson process of intensity $\lambda$ on $\mathbb{R}^{d}$. For
$s>0$,  define $B(s):=[0,s]^{d}$ and
$\mathcal{H}_{\lambda,s}:=\mathcal{H}_{\lambda}\cap B(s).$ Following
\cite{p10}, we write the Poisson Boolean model as $G(\mathcal
{H}_{\lambda,s};1)$.

There exist some notations related to percolation must be introduced.
Following Section 9.6 in \cite{p10}, let $\mathcal{H}_{\lambda,0}$
denote the point process $\mathcal{H}_{\lambda}\cup\{\textbf{0}\}$,
where $\textbf{0}$ is the origin in $\mathbb{R}^d$, and for $k\in\mathds{N}$, let $p_{k}(\lambda)$ denote the probability that
the order of the component in $G(\mathcal{H}_{\lambda,0};1)$ containing the
origin is equal to $k$. The $percolation$ $probability$
$p_{\infty}(\lambda)$ is defined to be the probability that $\textbf{0}$ lies in
an infinite component of the graph $G(\mathcal{H}_{\lambda,0};1)$.
Therefore, we have
$p_{\infty}(\lambda)=1-\sum\limits_{k=1}^{\infty}p_{k}(\lambda)$.
Let
\begin{eqnarray}\label{lc}
\lambda_c=inf\{\lambda>0:p_{\infty}(\lambda)>0\}
\end{eqnarray}
denote the  critical intensity  of continuum percolation. It is well known that
$0<\lambda_c<\infty$ for $d\geq 2$~\cite{p12}\cite{p13}\cite{p14}.

Following Section 9.6 in \cite{p10}, let $L_j(G)$ denote the order
of its $j$th-largest component for any graph $G$. Then
$L_1(G(\mathcal {H}_{\lambda,s};1))$ denotes the order of the
largest component of $G(\mathcal {H}_{\lambda,s};1)$. The asymptotic
properties of $L_1(G(\mathcal{H}_{\lambda,s};1))$ have been well
studied by Penrose. The basic asymptotic result about $L_1
(G(\mathcal{H}_{\lambda,s};1))$ is provided by Penrose (Theorem 10.9
in \cite{p10}), that if $\lambda\neq\lambda_c$ then
\begin{eqnarray}\label{o1}
s^{-d}L_1(G(\mathcal{H}_{\lambda,s};1))\xrightarrow{P}\lambda
p_{\infty}(\lambda)\quad as \quad s\rightarrow \infty.
\end{eqnarray}
Also, Penrose has given a central limit theorem for
$L_1(G(\mathcal{H}_{\lambda,s};1))$ in the supercritical case
$\lambda>\lambda_c$ (Theorem 10.22 in \cite{p10}), that
\begin{eqnarray}\label{clt_1}
s^{-d/2}(L_1(G(\mathcal{H}_{\lambda,s};1))-E[L_1(G(\mathcal{H}_{\lambda,s};1))])
\xrightarrow{D} \mathcal {N}(0,\sigma^2).
\end{eqnarray}
 However, the question as how large
$E[L_1(G(\mathcal{H}_{\lambda,s};1))]$ should be still remains
unsolved. By (\ref{o1}) it can be deduced that
$E[L_1(G(\mathcal{H}_{\lambda,s};1))]=\lambda p_{\infty}(\lambda)
s^d +o(s^d)$, where $f(s)=o(g(s))$ indicates that $\lim_{s\rightarrow\infty}\frac{f(s)}{g(s)}=0$. This result is not precise enough for some theoretic analysis
and practical applications.

The corresponding asymptotic results and central limit theorem for
$G(\mathcal {X}_n;r_n)$ have also been established by Peorose
(Theorems 11.9 and 11.16 in \cite{p10}), but we may ask
similar questions. This paper will study the problem and
 give a more precise description for
the asymptotic sizes of $L_1(G(\mathcal{H}_{\lambda,s};1))$ and
$L_1(G(\mathcal {X}_n;r_n))$. Our method can be adapted to study
some other models and problems.

\section{Main Results}

Our main  results can be formulated as the following two theorems.
\begin{thm}\label{t1}
Suppose $d\geq 2$ and $\lambda>\lambda_c$. Then there exist
constants $c=c(d,\lambda)>0$ and
$\tau_i=\tau_i(d,\lambda)$, $1\leq i \leq d$, with $\tau_1>0$, such that for all $s$
large enough,
\begin{eqnarray}\label{order_t1_00}
E[L_1(G(\mathcal{H}_{\lambda,s};1))]=\lambda
p_{\infty}(\lambda) s^d-\sum_{i=1}^d \tau_i s^{d-i}+o\left(e^{-c s}\right).
\end{eqnarray}
Also,  there exists a constant $\sigma=\sigma(d,\lambda) > 0$, such
that
\begin{eqnarray}\label{order_t1_01}
L_1(G(\mathcal{H}_{\lambda,s};1))s^{-d/2}- \lambda
p_{\infty}(\lambda) s^{d/2}+ \sum_{i=1}^{\lfloor\frac{d}{2} \rfloor}\tau_i s^{d/2-i}
 \xrightarrow{D} \mathcal {N}(0,\sigma^2)
\end{eqnarray}
as $s\rightarrow\infty$.
\end{thm}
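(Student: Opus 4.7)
The plan is to reduce both parts of the theorem to a sharp asymptotic expansion of $E[L_1(G(\mathcal{H}_{\lambda,s};1))]$, with the central limit theorem (\ref{order_t1_01}) then following as a corollary of that expansion combined with Penrose's CLT (\ref{clt_1}). Set $N_s:=|\mathcal{H}_{\lambda,s}|$, which is Poisson of mean $\lambda s^d$, and $D_s:=N_s-L_1(G(\mathcal{H}_{\lambda,s};1))$ for the number of vertices missed by the largest component. Then $E[L_1]=\lambda s^d-E[D_s]$, and the Mecke formula for Poisson processes gives
\begin{eqnarray*}
E[D_s]\;=\;\lambda\int_{B(s)}q(x;s)\,dx,
\end{eqnarray*}
where $q(x;s):=P(x\text{ not in the largest component of }G(\mathcal{H}_{\lambda,s}\cup\{x\};1))$. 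The task therefore reduces to expanding this integral in powers of $s$ with an exponentially small remainder, the natural comparison point being the translation-invariant Palm probability $1-p_\infty(\lambda)$ that the origin lies in a finite component of $G(\mathcal{H}_{\lambda,0};1)$.

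For $x\in B(s)$ let $t_i(x):=\min(x_i,s-x_i)$ and $t(x):=\min_i t_i(x)$. The analytic heart of the proof is to establish, by inclusion--exclusion over the subsets $S\subseteq\{1,\dots,d\}$ of face-directions in which $x$ sits near $\partial B(s)$, a decomposition of the form
\begin{eqnarray*}
q(x;s)-(1-p_\infty(\lambda))\;=\;\sum_{\emptyset\neq S\subseteq\{1,\dots,d\}}\psi_S\bigl((t_i(x))_{i\in S}\bigr)+O\bigl(e^{-c\,t(x)}\bigr),
\end{eqnarray*}
where $\psi_S$ is translation invariant in the remaining coordinates, captures the discrepancy due to restricting $\mathcal{H}_\lambda$ to the intersection of half-spaces adjacent to the faces in $S$, and decays exponentially in $\max_{i\in S}t_i$. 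The two percolation inputs needed are exponential decay of the diameter of the finite cluster of the origin for the supercritical Poisson Boolean model (valid for $d\ge2$, $\lambda>\lambda_c$) together with uniqueness of the infinite cluster; combined, these yield a quantitative coupling between ``$x$ lies in the largest component of $G(\mathcal{H}_{\lambda,s};1)$'' and ``$x$ lies in the unique infinite cluster of $G(\mathcal{H}_\lambda;1)$'', with error exponentially small in $t(x)$.

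Integrating the decomposition over $B(s)$ then yields (\ref{order_t1_00}): the bulk contributes $\lambda(1-p_\infty(\lambda))s^d$ to $E[D_s]$, while each $S$ with $|S|=i$ produces a term $\tau_i s^{d-i}$ whose coefficient is obtained by extending the $\psi_S$-integral from $[0,s/2]^i$ to $[0,\infty)^i$ at a cost of $s^{d-i}e^{-cs/2}=o(e^{-c's})$. Positivity of $\tau_1$ follows because near a single face the conditional probability that $x$ fails to join the giant cluster is strictly larger than in the bulk. For the CLT (\ref{order_t1_01}), the polynomial centring retains only the terms with $i\le\lfloor d/2\rfloor$; by (\ref{order_t1_00}) the discarded terms together with the $o(e^{-cs})$ remainder differ from $E[L_1]$ by $o(s^{d/2})$, so (\ref{order_t1_01}) follows from (\ref{clt_1}) after dividing by $s^{d/2}$ and invoking Slutsky's theorem.

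The main obstacle is the boundary-decay estimate above: uniqueness of the infinite cluster is only qualitative, whereas the expansion requires a quantitative coupling showing that, with probability $1-O(e^{-ct})$, every local configuration consistent with $x$ lying in the infinite cluster of $G(\mathcal{H}_\lambda;1)$ is also consistent with $x$ lying in the largest component of the restricted graph $G(\mathcal{H}_{\lambda,s};1)$. Proving this will require a renormalisation/block argument in the spirit of Chapters~9--10 of \cite{p10}, combined with the exponential tail of the radius of finite supercritical clusters.
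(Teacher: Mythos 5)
Your overall strategy (a Palm/Mecke reduction of $E[L_1]$ to a one-point probability $q(x;s)$, a boundary-layer expansion indexed by the faces of $B(s)$, then Penrose's CLT plus Slutsky to get (\ref{order_t1_01})) is the same in spirit as the paper's, which instead assigns the deficiency $\sum_{i\geq 2}|C_i|$ of $\mathcal{C}_{\infty}\cap B(s)$ to out-connect points near $\partial B(s)$ and studies $E[\xi(\cdot)]$ of small boxes. However, the analytic core is missing, and your decomposition is too weak as stated. If the remainder in the expansion of $q(x;s)-(1-p_{\infty}(\lambda))$ is only $O(e^{-c\,t(x)})$ with $t(x)=\min_i\min(x_i,s-x_i)$, then for $x$ near a single face the remainder is of the same order as the correction $\psi_{\{i\}}(t_i(x))$ itself, and integrating it over $B(s)$ yields an error of order $s^{d-1}$ --- already as large as the term $\tau_1 s^{d-1}$, so the expansion as written cannot even identify $\tau_1$, let alone produce an $o(e^{-cs})$ remainder as required by (\ref{order_t1_00}). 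What is actually needed is (i) a proof that the limiting profiles $\psi_S$ exist and decay exponentially, i.e.\ a quantitative stabilization/translation-invariance statement for half-space and orthant geometries, and (ii) a remainder, after subtracting the $S$-corrections, that is exponentially small in the distances to the faces \emph{not} in $S$, so that the inclusion--exclusion closes with total error $o(e^{-cs})$. You explicitly name this quantitative coupling as ``the main obstacle'' and gesture at a renormalization argument, but you do not supply it; this is exactly where the paper's work lies (exponential tail bounds for the diameter and for $\xi$ of unit boxes via out-connect points, the near-translation-invariance estimate comparing $E[\xi(B_i(x,a),s_1)]$ with $E[\xi(B_i(\widetilde{x},a),s_2)]$ with error $e^{-cN}$, the existence and linear scaling of the limits, and a telescoping summation that extracts the polynomial with exponentially small remainder).

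Two further points. Positivity of $\tau_1$ is asserted, not proved: monotonicity does give $\psi_{\{i\}}\geq 0$ (a point in the half-space infinite cluster lies in $\mathcal{C}_{\infty}$ by uniqueness), but strict positivity requires an explicit local construction; the paper obtains it by exhibiting, for $x$ within distance $1$ of a face, a positive-probability event (built with FKG) on which $x$ is isolated inside $B(s)$ yet belongs to $\mathcal{C}_{\infty}$ through points outside the box. Your reduction from ``largest component of the box'' to ``infinite cluster'' also silently uses the global large-deviation inputs (that the largest component has order $\lambda p_{\infty}(\lambda)s^d$ and that large finite clusters are exponentially unlikely), which is the content of the paper's first lemma; this part, and the final CLT step of discarding the terms with $i>\lfloor d/2\rfloor$ and invoking (\ref{clt_1}), are fine and coincide with the paper.
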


\begin{thm}\label{t2}
Suppose $d\geq 2$ and $\lambda>\lambda_c$. Let $\sigma$ and $\tau_i$
be the same constants appearing in Theorem \ref{t1}. There exists a
constant $\delta=\delta (d,\lambda)$, with $0<\delta\leq\sigma$,
such that
\begin{eqnarray*}
L_1\left(G\left(\mathcal{X}_n;(n/\lambda)^{-1/d}\right)\right)\left(n/\lambda\right)^{-1/2}-
p_{\infty}(\lambda) \left(\lambda n\right)^{1/2}+\sum_{i=1}^{\lfloor\frac{d}{2} \rfloor} \tau_i
\left(n/\lambda\right)^{\frac{1}{2}-\frac{i}{d}}
 \xrightarrow{D} \mathcal {N}(0,\delta^2)
\end{eqnarray*}
as $n\rightarrow\infty$.
\end{thm}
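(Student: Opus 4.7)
The strategy is to transfer the CLT of Theorem~\ref{t1} to the binomial setting by rescaling and de-Poissonization. Set $s_n:=(n/\lambda)^{1/d}$; the homothety $x\mapsto s_nx$ is an isomorphism between $G(\mathcal{X}_n;(n/\lambda)^{-1/d})$ and $G(\mathcal{B}_n;1)$, where $\mathcal{B}_n:=\{Y_1,\ldots,Y_n\}$ is an i.i.d.\ uniform sample in $B(s_n)$, so writing $\eta_n:=L_1(G(\mathcal{B}_n;1))$ the conclusion of Theorem~\ref{t2} is exactly the binomial analogue of $(\ref{order_t1_01})$ (with $s=s_n$ and $\delta$ in place of $\sigma$). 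To bring Theorem~\ref{t1} into play, I would enlarge the space by an independent Poisson variable $N_n$ of mean $\lambda s_n^d=n$ and set $\mathcal{P}_n:=\{Y_1,\ldots,Y_{N_n}\}$, which has the law of $\mathcal{H}_{\lambda,s_n}$; then $\xi_n:=L_1(G(\mathcal{P}_n;1))$ satisfies $(\ref{order_t1_00})$--$(\ref{order_t1_01})$, and $\eta_n$, being a function of $Y_1,\ldots,Y_n$ only, is independent of both $N_n$ and $(Y_j)_{j>n}$.

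The core step is a one-point sensitivity estimate of the form
\begin{eqnarray*}
E[L_1(G(\{Y_1,\ldots,Y_{k+1}\};1))]-E[L_1(G(\{Y_1,\ldots,Y_{k}\};1))]=p_\infty(\lambda)+o(n^{-1/2}),
\end{eqnarray*}
uniformly for $|k-n|\le\sqrt{n}\log n$. Telescoping between $n$ and $N_n$ and bounding the second moment of the per-point change produces the decomposition
\begin{eqnarray*}
\xi_n=\eta_n+p_\infty(\lambda)(N_n-n)+R_n,\qquad R_n=o_P(\sqrt{n}),\qquad E[R_n]=o(1),
\end{eqnarray*}
the last equality exploiting the exponentially small remainder in $(\ref{order_t1_00})$. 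Since $\eta_n\perp N_n$, a characteristic-function factorisation combined with the classical CLT for $N_n$ and Theorem~\ref{t1} gives
\begin{eqnarray*}
E[\exp(it(\eta_n-E[\eta_n])/s_n^{d/2})]\longrightarrow\exp\!\left(-\tfrac12(\sigma^2-\lambda p_\infty(\lambda)^2)\,t^2\right),
\end{eqnarray*}
which identifies $\delta^2=\sigma^2-\lambda p_\infty(\lambda)^2\in(0,\sigma^2)$; because $E[R_n]=o(1)$ one also has $E[\eta_n]=E[\xi_n]+o(1)$, so the polynomial centering $\lambda p_\infty(\lambda)s_n^d-\sum_i\tau_i s_n^{d-i}$ carries over to $\eta_n$ with exactly the same constants $\tau_i$.

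The main obstacle is the one-point sensitivity estimate, because the addition of a single vertex could in principle merge two macroscopic components and shift $L_1$ by $\Omega(s_n^d)$. The way around this is a quantitative ``uniqueness of the giant'' bound: for $\lambda>\lambda_c$, there exist $C,c'>0$ with $P(L_2(G(\mathcal{H}_{\lambda,s_n};1))>(\log s_n)^C)=O(e^{-c'\log s_n})$. This is assembled from the exponential decay of the radius of the finite cluster at the origin (a standard supercritical input already underlying Theorem~\ref{t1}) combined with a union bound over a coarse-grained partition of $B(s_n)$, and is transferred to the binomial setting via the total-variation coupling between $\mathcal{H}_{\lambda,s_n}\mid\{N_n=n\}$ and $\mathcal{B}_n$. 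Conditional on uniqueness of the giant, the added vertex contributes $+1$ to $L_1$ iff it lies within distance $1$ of the giant cluster, which by a bulk/boundary decomposition has average probability $p_\infty(\lambda)+o(n^{-1/2})$, delivering the required expansion.
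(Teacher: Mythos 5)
Your plan re-derives the de-Poissonization from scratch, but the key quantitative input — the one-point sensitivity estimate — is stated incorrectly, and the error is not minor. When a new vertex $x$ is added and lies within distance $1$ of the giant cluster, its contribution to $L_1$ is not $+1$: it is $1$ plus the total size of every finite cluster that has a point within distance $1$ of $x$, since all such clusters are bridged into the giant through $x$. This happens with probability bounded away from $0$, so the mean per-point increment is not $p_\infty(\lambda)$ but (heuristically, by a Mecke/Margulis--Russo computation, and consistent with differentiating $E[L_1]\approx\lambda p_\infty(\lambda)s^d$ in $\lambda$) the strictly larger constant $\frac{d}{d\lambda}\bigl(\lambda p_\infty(\lambda)\bigr)=p_\infty(\lambda)+\lambda p_\infty'(\lambda)$. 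Consequently your decomposition $\xi_n=\eta_n+p_\infty(\lambda)(N_n-n)+R_n$ forces $R_n$ to contain a term of the form $\lambda p_\infty'(\lambda)(N_n-n)$, which is of exact order $\sqrt{n}$, not $o_P(\sqrt{n})$; the characteristic-function factorisation then breaks down, and the identification $\delta^2=\sigma^2-\lambda p_\infty(\lambda)^2$ is not correct. Ironically, the obstacle you guard against (merging two macroscopic components, handled by the $L_2$ bound) is the harmless one; the merging of microscopic clusters into the giant is what you overlooked, and it is precisely the effect that makes the increment constant a derivative rather than $p_\infty(\lambda)$. In addition, the strict positivity $\delta>0$ is asserted but never argued; with the corrected constant you would have $\delta^2=\sigma^2-\lambda\beta^2$ with $\beta=(\lambda p_\infty(\lambda))'$, and showing this is positive is a genuinely nontrivial step.

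For comparison, the paper does none of this work: it invokes the discussion around (2.45) in the proof of Penrose's Theorem 11.16, which already supplies the binomial CLT for $L_1\bigl(G(\mathcal{X}_n;(n/\lambda)^{-1/d})\bigr)$ centered at the Poisson expectation $E[L_1(G(\mathcal{H}_{\lambda,s};1))]$ with $s=(n/\lambda)^{1/d}$ and limiting variance $\delta^2$ satisfying $0<\delta\le\sigma$ (including the positivity of $\delta$). The only new content of Theorem \ref{t2} is then the substitution of the expansion (\ref{order_t1_00}) into that centering, the terms with $i>\lfloor d/2\rfloor$ and the exponentially small remainder vanishing after division by $(n/\lambda)^{1/2}=s^{d/2}$. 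If you want a self-contained de-Poissonization along your lines, you must replace $p_\infty(\lambda)$ by $(\lambda p_\infty(\lambda))'$ throughout, control the bridged finite-cluster mass (using the exponential tail of finite cluster sizes in the supercritical phase) to get the sensitivity estimate uniformly for $|k-n|\le\sqrt{n}\log n$, and supply a separate argument for $\delta>0$.
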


To prove the two theorems, we estimate the value of
$E[L_1(G(\mathcal{H}_{\lambda,s};1))]$ firstly, and then using the
central limit theorems for $L_1(G(\mathcal{H}_{\lambda,s};1))$ and
$L_1(G(\mathcal{X}_n;(n/\lambda)^{-1/d}))$, we can prove
(\ref{order_t1_01}) and Theorem \ref{t2}.

Some notations must be stated before the proof of
our results. For any $x\in \mathbb{R}^d$, we write its $l_{\infty}$
norm with $\|x\|_{\infty}$ given by the maximum absolute value of
its coordinates. For any finite set $A\subset \mathbb{R} ^d$, we set
the diameter of $A$ by diam$(A)=\sup_{x,y\in A}\|x-y\|_{\infty}.$
Also, let $|A|$ denote the cardinality of $A$.

Let $\oplus$ denote the Minkowski addition of sets. Let $Leb(\cdot)$
denote the Lebesgue measure. For $s \geq 0$,   let $\lfloor s
\rfloor$ denote the smallest integer not smaller than $s$.

To simplify the expression, we will omit the dependence of all
constants on $d$ and $\lambda$, for example, the constant $c$ stands
for $c(d,\lambda)$.

Given $\lambda>\lambda_c$, by the uniqueness of the infinite
component in continuum percolation (Theorem 9.19 in \cite{p10}), the
infinite graph $G(\mathcal{H}_{\lambda};1)$ has precisely one
infinite component $\mathcal {C}_{\infty}$ with probability $1$.
Let $C_1,C_2,...,C_M$ denote the components of $G(\mathcal
{C}_{\infty}\cap B(s);1)$, taken in a decreasing order. We give a
result on the rate of sub-exponential decay of the difference
between $E[L_1(G(\mathcal{H}_{\lambda,s};1))]$ and $E[|C_1|]$.

\begin{lem}\label{temp1}
Suppose $d\geq 2$ and $\lambda>\lambda_c$. The exists a constant
$c>0$, such that for large enough $s$,
\begin{eqnarray}\label{temp1_0}
0\leq E[L_{1}(G(\mathcal{H}_{\lambda,s};1))]-E[|C_1|] \leq e^{-cs}.
\end{eqnarray}
\end{lem}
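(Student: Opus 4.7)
The plan is to show that $L_1(G(\mathcal{H}_{\lambda,s};1))=|C_1|$ except on an event of probability $e^{-cs}$, and then to absorb the bad-event contribution by a Cauchy--Schwarz estimate. The inequality $L_1\geq|C_1|$ holds always: a point $y\in\mathcal{H}_\lambda\cap B(s)$ cannot be within distance $1$ of any $x\in\mathcal{C}_\infty$ unless $y\in\mathcal{C}_\infty$ (otherwise $y$ would be connected to the unique infinite component), so $C_1$ persists as a component of $G(\mathcal{H}_{\lambda,s};1)$. Conversely, every component $D$ of $G(\mathcal{H}_{\lambda,s};1)$ disjoint from $\mathcal{C}_\infty$ lies inside a finite cluster of the full-space graph $G(\mathcal{H}_\lambda;1)$, because connectivity in the restricted graph entails connectivity in the full one. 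Consequently
\begin{equation*}
L_1\leq\max\!\left(|C_1|,\,\max_{F\,\text{finite},\,F\cap B(s)\neq\emptyset}|F|\right).
\end{equation*}

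Fix $\alpha\in(0,\lambda p_\infty(\lambda))$ and introduce the events $A_1=\{\exists\text{ finite cluster }F\text{ with }F\cap B(s)\neq\emptyset\text{ and }|F|\geq\alpha s^d\}$ and $A_2=\{|C_1|<\alpha s^d\}$. On $(A_1\cup A_2)^c$, every finite cluster meeting $B(s)$ has fewer than $\alpha s^d\leq|C_1|$ vertices, so $L_1=|C_1|$. To bound $P(A_1)$ I would invoke the classical exponential decay of finite-cluster sizes in the supercritical phase of continuum percolation (a standard large-deviation estimate, available in Chapter 10 of Penrose's book): $P(|C_0|\geq k,\,|C_0|<\infty)\leq c_1\exp(-c_2 k^{(d-1)/d})$. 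Campbell's formula together with Markov's inequality then gives
\begin{equation*}
P(A_1)\;\leq\;\lambda s^d\cdot c_1\exp\!\bigl(-c_2(\alpha s^d)^{(d-1)/d}\bigr),
\end{equation*}
which is $O(e^{-cs})$ for $d\geq 2$ since $s^{d-1}\geq s$.

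The bound $P(A_2)\leq e^{-cs}$ is the main technical step; the qualitative limit $s^{-d}|C_1|\to\lambda p_\infty(\lambda)$ alone is not enough, so I would obtain the exponential rate via a block-renormalisation argument. Tile $B(s)$ by sub-boxes of a fixed large side $K$, and call a sub-box \emph{good} if it contains many points of $\mathcal{C}_\infty$ and their traces connect appropriately across each of the $2d$ faces to the good portion of every neighbouring box. For $K$ large enough, the process of good sub-boxes stochastically dominates a highly supercritical Bernoulli site process on $\mathbb{Z}^d\cap B(s)/K$; classical exponential-rate estimates for supercritical Bernoulli percolation in a box then supply, with probability at least $1-e^{-cs}$, a spanning cluster of good sub-boxes of volume $\Theta((s/K)^d)$, which lifts back to a connected subset of $\mathcal{C}_\infty\cap B(s)$ of size at least $\alpha s^d$.

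Finally, using the crude bound $L_1\leq|\mathcal{H}_{\lambda,s}|$, a Poisson variable with mean $\lambda s^d$, we obtain $E[L_1^2]=O(s^{2d})$; Cauchy--Schwarz then yields
\begin{equation*}
E[L_1-|C_1|]\;=\;E\bigl[(L_1-|C_1|)\mathbf{1}_{A_1\cup A_2}\bigr]\;\leq\;\sqrt{E[L_1^2]}\cdot\sqrt{P(A_1\cup A_2)}\;=\;O\!\left(s^d e^{-cs/2}\right),
\end{equation*}
which is bounded by $e^{-c's}$ for any $c'<c/2$ once $s$ is large. The main obstacle is the renormalisation leading to $P(A_2)\leq e^{-cs}$: one has to verify that the block-percolation comparison is genuinely supercritical and that the resulting spanning cluster of good sub-boxes actually corresponds to a single connected subset of $\mathcal{C}_\infty$, rather than a collection of pieces.
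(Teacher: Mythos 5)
Your skeleton is sound, and your bookkeeping (a global good event plus Cauchy--Schwarz, with the crude Poisson bound $E[L_1^2]=O(s^{2d})$) is a workable alternative to the paper's organization, which instead writes both expectations via Palm theory as $\lambda\int_{B(s)}P[\,\cdot\,]\,dx$ and bounds the integrand $P[\{x\in V_1(x)\}\cap\{x\notin C_\infty(x)\}]$ pointwise. Your handling of $A_1$ is the same ingredient the paper uses (decay of $p_k(\lambda)$, Theorem 10.15 in Penrose, via a Palm/Mecke computation), and your identification of the components of $G(\mathcal{H}_{\lambda,s};1)$ that meet $\mathcal{C}_\infty$ with the $C_i$'s is correct; it rests on uniqueness of the infinite cluster, which the paper also invokes.

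The genuine gap is exactly where you flag it: the estimate $P(A_2)=P[|C_1|<\alpha s^d]\le e^{-cs}$ carries essentially all of the quantitative content of the lemma, and you only sketch a block renormalization for it. As written, the sketch omits the steps that are actually delicate: the definition of a good box must guarantee uniqueness and crossing of the box cluster and connections inside slightly enlarged boxes so that the clusters of neighbouring good boxes merge; the comparison with a supercritical Bernoulli site process requires a finite-range-dependence domination argument; and one must show that the spanning cluster of good boxes is connected to $\mathcal{C}_\infty$ (defined in the whole space, not in $B(s)$) and carry a density estimate to convert $\Theta((s/K)^d)$ good boxes into $\alpha s^d$ points of a single component. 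This amounts to re-proving a Penrose--Pisztora type large deviation bound. You do not need to: with the facts you already use, on $A_1^c$ the largest component of $G(\mathcal{H}_{\lambda,s};1)$ cannot sit inside a finite full-space cluster once $L_1\ge\alpha s^d$, hence it intersects $\mathcal{C}_\infty$ and, by your own observation, coincides with one of the $C_i$'s, so $|C_1|\ge L_1\ge\alpha s^d$; therefore $A_2\subseteq A_1\cup\{L_1<\alpha s^d\}$, and the bound $P[L_1<(1-\varepsilon)\lambda p_\infty(\lambda)s^d]\le\exp(-c_1 s^{d-1})$ is precisely Theorem 10.19 of Penrose's book, which is what the paper cites at the corresponding point of its argument. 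With that substitution your proof closes without any new renormalization.
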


\begin{proof}
By the definition of $L_{1}(G(\mathcal{H}_{\lambda,s};1))$ and
$C_1$, obviously $E[L_{1}(G(\mathcal{H}_{\lambda,s};1))]\geq
E[|C_1|]$. Thus it just remains to prove the second inequality of
(\ref{temp1_0}).

 Given any $x\in\mathbb{R}^d$, let
$C_{\infty}(x)$ denote the infinite connected component of
$G(\mathcal{H}_{\lambda}\cup\{x\};1)$. By Palm theorem for Poisson
processes (Theorem 1.6 in \cite{p10}), we have
\begin{eqnarray*}\label{order_t1_6}
E[L_{1}(G(\mathcal{H}_{\lambda,s};1))]=\lambda\int_{B(s)}P[x\in
V_{1}(x)]dx,
\end{eqnarray*}
where $V_{1}(x)$ denotes the largest component of
$G(\mathcal{H}_{\lambda,s}\cup\{x\};1)$, and
\begin{eqnarray*}\label{order_t1_7}
E[|C_1|]=\lambda\int_{B(s)}P[x\in C_{1}(x)]dx,
\end{eqnarray*}
where $C_{1}(x)$ denotes the largest component of $C_{\infty}(x)
\cap B(s)$. Therefore,
\begin{eqnarray}\label{order_t1_8}
\begin{aligned}
E[L_{1}(G(\mathcal{H}_{\lambda,s};1))]-E[|C_1|]&=\lambda\int_{B(s)}(P[x\in
V_{1}(x)]-P[x\in C_{1}(x)])dx\\
&\leq \lambda\int_{B(s)} P[\{x\in V_{1}(x)\} \cap \{ x\notin
C_1(x)\}]dx\\
&= \lambda\int_{B(s)} P[\{x\in V_{1}(x)\} \cap \{ x\notin
C_{\infty}(x)\}]dx.
\end{aligned}
\end{eqnarray}
Suppose $0<\varepsilon<\frac{1}{2}$. By Theorem 10.19 in \cite{p10},
there exist constants $c_{1}>0$ and $s_{1}>0$, such that if $s>s_1$
then
\begin{eqnarray}\label{order_t1_9}
\begin{aligned}
P\left [|V_{1}(x)|<(1-\varepsilon)\lambda s^{d}p_{\infty}(\lambda)
\right ] &\leq P\left
[L_{1}(G(\mathcal{H}_{\lambda,s};1))<(1-\varepsilon)\lambda
s^{d}p_{\infty}(\lambda)\right ]\\
&\leq \exp \left (-c_{1}s^{d-1} \right ).
\end{aligned}
\end{eqnarray}
Also, by Theorem 10.15 in \cite{p10}, there exists a constant
$c_2>0$ such that for $s$ large enough,
\begin{eqnarray}\label{order_t1_10}
\sum\limits_{k\geq \lceil(1-\varepsilon)\lambda
s^{d}p_{\infty}(\lambda)\rceil}p_{k}(\lambda)<\exp \left
(-c_2[(1-\varepsilon)\lambda s^{d}p_{\infty}(\lambda)]^{(d-1)/d}
\right ).
\end{eqnarray}
Therefore, from (\ref{order_t1_9}) and (\ref{order_t1_10}) we can
obtain
\begin{eqnarray*}\label{order_t1_11}
&&P[\{x\in V_{1}(x)\}\cap\{x\not\in
C_{\infty}(x)\}]\nonumber\\
&&~~~~\leq P[|V_{1}(x)|<(1-\varepsilon)\lambda
s^{d}p_{\infty}(\lambda)]\nonumber\\
&&~~~~~~~~~+P[\{x\in V_{1}(x)\}\cap\{x\not\in
C_{\infty}(x)\}\cap\{|V_{1}(x)|\geq(1-\varepsilon)\lambda
s^{d}p_{\infty}(\lambda)\}]\nonumber\\
&&~~~~\leq \exp \left (-c_{1}s^{d-1} \right )+\sum\limits_{k\geq
\lceil(1-\varepsilon)\lambda
s^{d}p_{\infty}(\lambda)\rceil}p_{k}(\lambda)\nonumber\\
&&~~~~< \exp \left (-c_{1}s^{d-1} \right )+ \exp \left
(-c_2[(1-\varepsilon)\lambda p_{\infty}(\lambda)]^{(d-1)/d} s^{d-1}
\right )~~as~s\rightarrow\infty.
\end{eqnarray*}
Combined with (\ref{order_t1_8}) this yields our result.
\end{proof}

To estimate the value of $E[L_1(G(\mathcal{H}_{\lambda,s};1))]$, by
Lemma \ref{temp1} we just need to get the value of $E[|C_1|]$
instead. Actually, by Palm theory for infinite Poisson process
(Theorem 9.22 in \cite{p10}),
\begin{eqnarray}\label{comm_6}
E\left [\sum_{i=1}^M |C_i| \right ]=E[\left|\mathcal
{C}_{\infty}\cap B(s)\right|]=\lambda p_{\infty}(\lambda)s^d,
\end{eqnarray}
so we just need to estimate the value of $E[\sum_{i=2}^M |C_i|]$.
Let $L(s):=B(s)\backslash[1,s-1]^d$. For any $2\leq i \leq M$, since
$C_i\subset \mathcal {C}_{\infty}$, therefore there exists at least
one point in $L(s)\cap C_i$ which connects to $\mathcal
{C}_{\infty} \setminus B(s)$ directly; we choose the nearest one to the
boundary of $B(s)$ as the $out-connect$ $point$. We can see that
each component of $C_2,...,C_M$ contains exactly one out-connect
point.

For any region $R\subseteq B(s)$ and $2\leq i \leq M$, define
\begin{equation}\label{chidef}
\chi_{i}(R):=\left\{
\begin{array}{ll}
1, & \mbox{if the out-connect point of $C_i$ is contained by } R, \\
0, & \mbox{otherwise},
\end{array}
\right.
\end{equation}
and define
\begin{equation}\label{xidef}
\xi(R)=\xi(R,s):= \sum\limits_{i=2}^{M}\chi_{i}(R)|C_i|.
\end{equation}
By the definition of $\xi(\cdot)$, it is easy to see that for any
$R,\widetilde{R}\subset B(s)$, if $Leb(R\cap \widetilde{R})=0$, then
$E[\xi(R\cap \widetilde{R})]=0$ and $E[\xi(R\cup
\widetilde{R})]=E[\xi(R)]+E[\xi(\widetilde{R})].$

For $0 \leq i \leq d-1$, define
$$R_i=R_i(s):=[0,1]\times\underbrace{[0,s/2]\times\cdots\times[0,s/2]}\limits_{d-1-i} \times \underbrace{[1,s/2]\times\cdots\times[1,s/2]}\limits_{i}.$$
Noted that $[1,s/2]^d \cap L(s)=\emptyset$, then by symmetry,
\begin{eqnarray}\label{total_1}
\begin{aligned}
&E\left [\sum\limits_{i=2}^M |C_i| \right
]=E\left[\xi(B(s))\right]=2^dE\left[\xi\left(\left[0,\frac{s}{2}\right]^d\right)\right]\\
&=2^d\left\{E\left[\xi(R_0)\right]+E\left[\xi\left(\left[1,\frac{s}{2}\right]\times\left[0,\frac{s}{2}\right]^{d-1}\right)\right]\right\} = 2^d\sum\limits_{i=0}^{d-1} E\left[\xi(R_i)\right].
\end{aligned}
\end{eqnarray}
Thus, we just need to estimate the value of $E\left[
\xi\left(R_i \right) \right]$. The following Lemmas
\ref{exponent2}-\ref{limit} are introduced to get the desired
estimation.

\begin{lem}\label{exponent2}
Suppose $d\geq 2$ and $\lambda>\lambda_c$. Let $V_x=V_x(s)$ denote
the connected component containing $x$ of
$G(\mathcal{H}_{\lambda,s}\cup \{x\};1)$. There exist constants
$c>0$ and $n_0>0$, such that if $n>n_0$ and $s>2n$ then for any point $x \in B(s)$,
\begin{eqnarray}\label{exp00}
P \left [ n \leq
\mbox{diam}(V_x) \leq s/2 \right ] < e^{ -c n },
\end{eqnarray}
and
\begin{eqnarray}\label{exp01}
P \left [ \left \{ |V_x| \geq n \right \} \cap \left \{
\mbox{diam}(V_x) \leq s/2 \right \} \right ] < \exp \left ( -c
n^{(d-1)/d} \right ).
\end{eqnarray}
\end{lem}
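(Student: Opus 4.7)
The plan is to compare the restricted-graph component $V_x$ with the component $W_x$ of $x$ in the full-process graph $G(\mathcal{H}_\lambda\cup\{x\};1)$ built on the infinite Poisson process. Since the restricted graph is a subgraph of the full one, $V_x\subseteq W_x$, and $V_x=W_x$ precisely when $W_x\subseteq B(s)$; otherwise some $z\in V_x$ has a Poisson neighbor in $B(s)^c$ at Euclidean distance $\leq 1$, and combined with $\mbox{diam}(V_x)\leq s/2$ this forces the $l_\infty$-bounding box of $V_x$ to lie within unit $l_\infty$-distance of $\partial B(s)$.

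On the ``interior'' event $\{V_x=W_x\}$, both bounds reduce to standard decay properties of finite supercritical clusters. By translation invariance $W_x$ has the same law as the component of the origin in $G(\mathcal{H}_{\lambda,0};1)$, so Penrose's Theorem 10.15 directly yields $P[|W_x|\geq n,\, W_x\text{ finite}]\leq\exp(-c_0 n^{(d-1)/d})$, giving (\ref{exp01}). For (\ref{exp00}), I would invoke the exponential tail of the diameter of finite supercritical clusters, $P[n\leq\mbox{diam}(W_x)<\infty]\leq e^{-c_1 n}$, which is classical in continuum percolation and follows from Peierls-type coarse-graining into unit boxes together with the exponential decay of the truncated two-point connectivity for $\lambda>\lambda_c$.

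On the ``boundary'' event $\{V_x\neq W_x\}$, a union bound over the $2d$ faces (and $O(2^d)$ corners) of $B(s)$ reduces to the case where the bounding box of $V_x$ sits within unit distance of a single fixed face. I would then reflect the Poisson process across that face, embedding $V_x$ into a larger component in a translation-invariant process on a doubled region, and reapply the interior-case estimates to the extended component; the condition $s>2n$ ensures that the bounding box of side $\leq s/2$ fits inside the doubled region with room for a unit buffer. The main obstacle is executing the boundary step rigorously so that the exponential rate is preserved without an uncontrolled prefactor; a secondary nuisance is locating the diameter-tail bound used in the interior case, which is standard in supercritical continuum percolation but not spelled out among the Penrose results cited up to this point.
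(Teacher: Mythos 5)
Your interior/boundary split founders exactly where the real difficulty of this lemma sits: the boundary case. On the event $\{V_x\neq W_x\}$ the full-process cluster of $x$ may well be the infinite cluster, so no infinite-volume finite-cluster tail applies, and the reflection device you propose does not repair this. The configuration obtained by reflecting the in-box points across a face is \emph{not} a Poisson process --- the two halves are perfectly correlated --- so the ``interior-case estimates'' (Palm theory, the tail of $p_k(\lambda)$ from Theorem 10.15, a diameter tail for finite clusters) are statements about Poisson clusters and cannot simply be ``reapplied'' to the component of $x$ in the reflected process. Moreover the doubled region is still a finite box, so the same boundary problem recurs at the adjacent faces and corners, and your proposed union bound over faces does not remove it. To push the bound through for the reflected, mirror-correlated, finite-volume configuration you would have to rerun a Peierls/renormalization argument that tolerates these features --- but an argument of that type already proves the lemma directly and uniformly in $x\in B(s)$, with no interior/boundary dichotomy at all. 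That is precisely what the paper does: following the latter part of the proof of Theorem 10.18 in Penrose, it coarse-grains $B(s)$ into blocks, observes that on $\{\mbox{diam}(V_x)\le s/2\}$ the exterior block boundary $D_{ext}C_x$ of the block cluster of $x$ is $*$-connected, separates $x$'s block from the giant block component, and satisfies $|D_{ext}C_x|\gtrsim n/M(s)$ (for (\ref{exp00})) or $|D_{ext}C_x|\gtrsim |C_x|^{(d-1)/d}$ (for (\ref{exp01}), together with a Poisson bound on the number of points per block), after which a Peierls count gives the exponential decay.

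A secondary issue, which you yourself flag, compounds this: even your interior case for (\ref{exp00}) rests on the exponential tail of the diameter of finite supercritical clusters, which is not among the results cited and which you propose to derive from exponential decay of the truncated two-point function --- itself a nontrivial input in the continuum setting. The natural derivation is again the block/Peierls argument, i.e.\ the very machinery your outline tries to avoid. So as written the proposal has a genuine gap in the boundary step (and an uncited black box in the interior step), and closing it would essentially reproduce the paper's uniform renormalization proof.
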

\begin{proof}
The proof uses ideas from the latter part of the proof of Theorem
10.18 in \cite{p10}. Given $x\in \mathbb{R}^d$, let $\widetilde{z}$
denote the point in $B_{\mathbb{Z} }'(n(s))$ satisfying $x\in
B_{\widetilde{z}}$, where the definition of $B_{\mathbb{Z} }'(n(s))$ and $
B_{\widetilde{z}}$ is given in pp.216 and pp.217 of \cite{p10} respectively. Also, $C_x$, $D_{ext}C_x$, $M_0$, $n(s)$ and $M(s)$ are defined as same as those appearing in pp.218-219 of
\cite{p10}. Penrose  has proved that $D_{ext}C_x$ is $*-$connected and if $|C_x|<n(s)^d/2$ then
\begin{eqnarray}\label{exp22_1}
\begin{aligned}
|D_{ext}C_x| \geq (2d)^{-1} (1- ({\textstyle  \frac{2}{3}})^{1/d})|C_x|^{(d-1)/d},
\end{aligned}
\end{eqnarray}
see pp.219 of \cite{p10}.

Let $\mathcal{A}_{m,s}$ denote the collection of $*-$connected
subsets of cardinality $m$ which disconnects the point
$\widetilde{z}$ from the giant component of $B_{\mathbb{Z}
}'(n(s))$. Then $\mathcal{A}_{m,s}$ is restricted by the box of
$B_{\mathbb{Z} }'(n(s))\cap ([-m,m]^d \oplus \widetilde{z})$ and
$D_{ext}C_x \in \mathcal{A}_{|D_{ext}C_x|,s}$. By a Peierls argument
(Corollary 9.4 in \cite{p10}), the cardinality $|\mathcal{A}_{m,s}|$
is bounded by $(2m+1)^d \gamma^m$, with $\gamma:=2^{3^d}$.
Therefore, there exists a constant $k_0$ such that for any integer
$k>k_0$,
\begin{eqnarray}\label{exp23}
\begin{aligned}
P \left [|D_{ext}C_x|\geq k \right ] &\leq P \left [
\bigcup\limits_{m \geq k} \bigcup\limits_{\sigma \in
\mathcal{A}_{m,s} } \{X_z=0, \forall z\in\sigma\} \right
]\\
&\leq \sum\limits_{m \geq k } (2m+1)^d \gamma^m (1-p_1)^m <
(\frac{2}{3})^{k}.
\end{aligned}
\end{eqnarray}

By the definition of $C_x$ and $D_{ext}C_x$,  if $n\leq$diam$(V_x)\leq
s/2$ then $$\frac{n}{M(s)}-1\leq\mbox{diam}(C_x)\leq \frac{n(s)}{2}+2,$$ and therefore we can get $|C_x| < n(s)^d/2$ and
$|D_{ext}C_x|\geq \frac{n}{M(s)}-1$ for large $s$.  Therefore, by (\ref{exp23}),
there exists a constant $n_0>0$, such that if $n>n_0$ then,
\begin{eqnarray*}
P \left [ n\leq
\mbox{diam}(V_x) \leq \frac{s}{2}\right ] \leq P \left
[|D_{ext}C_x|\geq \frac{n}{M(s)}-1 \right ]
 <\left (\frac{2}{3} \right )^{\frac{n}{2M_0}-1}.
\end{eqnarray*}
This yields (\ref{exp00}).

It remains to consider the case of $|V_x|>n$. Since $C_x$ is a
$*-$connected component containing $\widetilde{z}$ in $B_{\mathbb{Z}
}'(n(s))$, by a Peierls argument (Lemma 9.3 in \cite{p10}), for all
$k$, the number of $*-$ connected subsets of $B_{\mathbb{Z}
}'(n(s))$ of cardinality $k$ containing $\widetilde{z}$ is at most
$\gamma^k$. Let $c_2\geq e^2 (2M_0)^d \lambda$. If $|C_x|<k$ and
$|V_x|\geq c_2 k+1$, then for at least one of these subsets of
$B_{\mathbb{Z} }'(n(s))$ the union of the associated boxes $B_z$
contains at least $c_2 k$ points of $\mathcal{H}_{\lambda}$.
Therefore, by Lemma 1.2 in \cite{p10}, we have
\begin{eqnarray}\label{exp24}
\begin{aligned}
P[\{|C_x|<k\}\cap\{|V_x|\geq c_2 k+1\}] &< \gamma^k
P \left [Po \left (k(2M_0)^d\lambda \right )\geq c_2 k \right ]\\
&\leq \gamma^k \exp \left \{-\left (\frac{c_2 k}{2} \right ) \log
\left ( \frac{c_2}{(2M_0)^d \lambda } \right ) \right \}.
\end{aligned}
\end{eqnarray}
So if $c_2$ is chosen large enough, this probability decays
exponentially in $k$.

Set $\beta:=(2d)^{-1} (1- (\frac{2}{3})^{1/d})$. By (\ref{exp22_1})
and (\ref{exp23}), we have
\begin{eqnarray*}
P[\{\mbox{diam}(V_x) \leq s/2 \} \cap \{ |C_x| \geq k \} ] \leq
P \left [|D_{ext}C_x| \geq \beta k^{(d-1)/d} \right ]<
(\frac{2}{3})^{\beta k^{(d-1)/d}}.
\end{eqnarray*}
Combined with (\ref{exp24}), this gives (\ref{exp01}).
\end{proof}

For $x\in B(s)$ and $0<a\leq 1$, define the box
\begin{eqnarray*}
B_i(x,a):=x\oplus
\big(\underbrace{[0,1]\times\cdots\times[0,1]}\limits_i \times
\underbrace{[0,a]\times\cdots\times[0,a]}\limits_{d-i}\big).
\end{eqnarray*}
Also, for any region $R\subseteq B(s)$, define
\begin{eqnarray*}\label{prop1_1}
D(R)=D(R,s):=\max_{2\leq j \leq M,\chi_{j}(R)=1} \mbox{diam} (C_j).
\end{eqnarray*}

\begin{lem}\label{prop1}
 Suppose $d\geq 2$ and
$\lambda>\lambda_c$. There exist constants $c>0$ and
$n_0>0$, such that if $x\in B(s)$, $a\in(0,1]$ and $n>n_0$ then
\begin{eqnarray}\label{prop1_000}
P[D(B_i(x,a))\geq n]<e^{-c n},
\end{eqnarray}
and
\begin{eqnarray}\label{prop1_00}
P[\xi(B_i(x,a))\geq n]<\exp \left (-c n^{(d-1)/d} \right )+e^{-c
s}.
\end{eqnarray}
\end{lem}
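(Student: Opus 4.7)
The core structural observation is that no edge of $G(\mathcal{H}_{\lambda};1)$ can join a point of $\mathcal{C}_{\infty}$ to a finite-cluster point of $\mathcal{H}_{\lambda}$, since the latter would then inherit an infinite cluster. Restricting to $B(s)$ preserves this separation, so every $C_j$ is, as a vertex set, also a component of $G(\mathcal{H}_{\lambda,s};1)$. In particular, if $y$ is the out-connect point of $C_j$ then $V_y=C_j$, where $V_y$ denotes the component of $y$ in $G(\mathcal{H}_{\lambda,s};1)$; hence $\mbox{diam}(C_j)=\mbox{diam}(V_y)$ and $|C_j|=|V_y|$.

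Define the good event $\mathcal{E}$ as the intersection of (i) the largest component $V_1$ of $G(\mathcal{H}_{\lambda,s};1)$ equals $C_1$, and (ii) every other component of $G(\mathcal{H}_{\lambda,s};1)$ has diameter at most $s/2$. Failure of (i) would require either $|C_1|<(1-\varepsilon)\lambda p_{\infty}(\lambda)s^d$ or some finite cluster of $\mathcal{H}_{\lambda}$ in $B(s)$ to rival $(1-\varepsilon)\lambda p_{\infty}(\lambda)s^d$, both ruled out at rate $e^{-cs^{d-1}}\le e^{-cs}$ by Theorems 10.15 and 10.19 of \cite{p10}; failure of (ii) is ruled out by a uniqueness-of-crossing estimate for supercritical continuum percolation at the same rate. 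Thus $P[\mathcal{E}^c]\le e^{-cs}$. Crucially, on $\mathcal{E}$ the out-connect point $y$ of any $C_j$ with $j\ge 2$ cannot lie in $V_1=C_1$, so $V_y$ is some non-largest component and satisfies $\mbox{diam}(V_y)\le s/2$.

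For (\ref{prop1_000}), on $\mathcal{E}\cap\{D(B_i(x,a))\ge n\}$ there exists $y\in\mathcal{H}_{\lambda}\cap B_i(x,a)$ with $n\le\mbox{diam}(V_y)\le s/2$. Palm's theorem (Theorem 1.6 in \cite{p10}) and Lemma \ref{exponent2}(\ref{exp00}) then give
\begin{eqnarray*}
P[\{D(B_i(x,a))\ge n\}\cap\mathcal{E}]\le\lambda\int_{B_i(x,a)}P[n\le\mbox{diam}(V_y)\le s/2]\,dy\le\lambda e^{-cn}.
\end{eqnarray*}
Since $D(B_i(x,a))\le s$ trivially, we may restrict to $n\le s$; then $e^{-cs}\le e^{-cn}$, so $P[\mathcal{E}^c]$ is absorbed, giving (\ref{prop1_000}) after adjusting the constant. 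For (\ref{prop1_00}), on $\mathcal{E}$ we have $\xi(B_i(x,a))\le\sum_{y\in\mathcal{H}_{\lambda}\cap B_i(x,a)}|V_y|\mathds{1}\{\mbox{diam}(V_y)\le s/2\}$. Writing $N=|\mathcal{H}_{\lambda}\cap B_i(x,a)|$ (Poisson with mean at most $\lambda$) and splitting on a threshold $k=k(n)$,
\begin{eqnarray*}
P[\xi(B_i(x,a))\ge n,\mathcal{E}]\le P[N\ge k]+k\sup_{y}P[|V_y|\ge n/k,\mbox{diam}(V_y)\le s/2].
\end{eqnarray*}
The first term is a Poisson tail; the second is controlled by Lemma \ref{exponent2}(\ref{exp01}). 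A suitable choice of $k$ yields the main term $\exp(-cn^{(d-1)/d})$, and $P[\mathcal{E}^c]$ contributes the $e^{-cs}$ remainder.

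The hardest step is verifying $P[\mathcal{E}^c]\le e^{-cs}$, which needs the deviation estimate for $|C_1|$, a tail bound on the largest finite cluster of $\mathcal{H}_{\lambda}$ in $B(s)$, and a sub-exponential uniqueness-of-macroscopic-crossing estimate. The threshold optimisation in (\ref{prop1_00}) is routine but must be handled so that the exponent $(d-1)/d$ from Lemma \ref{exponent2} is preserved, up to constants, in the final bound.
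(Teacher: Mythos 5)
Your structural observation (no edge can join $\mathcal{C}_{\infty}$ to a finite cluster, so each $C_j$ is itself a component of $G(\mathcal{H}_{\lambda,s};1)$ and $V_y=C_j$ for its out-connect point $y$) is correct and is implicitly what the paper uses, and your proof of (\ref{prop1_000}) is essentially the paper's: restrict to the event that all non-largest components have diameter at most $s/2$ (the paper cites Proposition 10.13 of \cite{p10} for the $e^{-cs}$ bound), apply Palm theory together with (\ref{exp00}), and absorb the $e^{-cs}$ term using $D(B_i(x,a))\leq s$.

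The proof of (\ref{prop1_00}), however, has a genuine gap: the split on the Poisson count $N=|\mathcal{H}_{\lambda}\cap B_i(x,a)|$ at a threshold $k=k(n)$ cannot deliver the exponent $(d-1)/d$, and the step you call routine optimisation in fact fails. To make $P[N\geq k]\leq\exp(-cn^{(d-1)/d})$ you need $k$ of order at least $n^{(d-1)/d}$ (up to logarithms), but then (\ref{exp01}) applied at level $n/k\approx n^{1/d}$ only yields $\exp\left(-cn^{(d-1)/d^2}\right)$; balancing the two terms gives at best an exponent of order $n^{(d-1)/(2d-1)}$ (for $d=2$, $n^{1/3}$ instead of the required $n^{1/2}$). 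No choice of $k$ works, because keeping $n/k$ of order $n$ forces $k=O(1)$, for which $P[N\geq k]$ does not decay in $n$ at all. The missing idea, which is the paper's key step, is deterministic rather than probabilistic: points of distinct components of $G(\mathcal{H}_{\lambda,s};1)$ are at Euclidean distance greater than $1$ from one another, and $B_i(x,a)$ has every side length at most $1$, so only a bounded number (the paper uses $2^d$) of components can have their out-connect point in $B_i(x,a)$. Hence $\xi(B_i(x,a))\geq n$ forces a single contributing component of size at least $2^{-d}n$, and Palm theory plus (\ref{exp01}) give a bound of the form $\lambda\exp\left(-c(2^{-d}n)^{(d-1)/d}\right)\leq\exp\left(-c'n^{(d-1)/d}\right)$ directly, with no threshold, no Poisson tail, and the exponent preserved; your argument should be repaired by replacing the random bound $N$ on the number of contributing components with this packing bound.
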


\begin{proof}
Let $W_1$ denote the number of the connected components which
intersect with  $B_i(x,a)$, and have metric diameter not greater
than $s/2$ but not smaller than $n$. By Markov's inequality,
\begin{eqnarray}\label{prop1_02}
P \left [\left \{D(B_i(x,a)) \geq n \right \} \cap \left \{
D(B_i(x,a)) \leq s/2 \right \} \right ]\leq P[W_1>0] \leq
E[W_1].
\end{eqnarray}
By Palm theory for Poisson process and Lemma \ref{exponent2}, if $n>n_0$ then
\begin{eqnarray}\label{prop1_03}
\begin{aligned}
E[W_1]&=\lambda\int_{B_i(x,a)}  P \left [ \left \{
\mbox{diam}(V_x(s)) \geq n \right \} \cap \left \{
\mbox{diam}(V_x(s)) \leq s/2 \right \} \right ]dx\\
&<\lambda a^{d-i} e^{-c n}.
\end{aligned}
\end{eqnarray}
Also, $C_i$ ($2\leq i \leq M$) is not the largest component of
$G(\mathcal{H}_{\lambda,s};1)$, then by Proposition 10.13 in
\cite{p10}, there exist constants $c_1>0$ and $s_1>0$, such that if
$s>s_1$ then
\begin{eqnarray}\label{prop1_04}
&& P \left [ D(B_i(x,a)) > s/2  \right ] <e^{-c_1 s} .
\end{eqnarray}
Together with (\ref{prop1_02}), (\ref{prop1_03}) and
(\ref{prop1_04}), we obtain
\begin{eqnarray*}
P[D(B_i(x,a))\geq n]<e^{-c n}+e^{-c_1 s}.
\end{eqnarray*}
Since $P[D(B_i(x,a))>s]=0$, thus (\ref{prop1_000}) follows.

Note that $B_i(x,a)$ contains at most $2^d$ connected components.
Thus, if $\xi(B_i(x,a)) \geq n$, by the definition of $\xi(\cdot)$,
there exists at least one component intersecting with $B_i(x,a)$
such that it contains no less than $2^{-d}n$ points. Let $W_2$ be
the number of the connected components which intersect with
$B_i(x,a)$, and have more than $2^{-d}n$ elements and not larger
than $s/2$ metric diameter. With the similar argument as
(\ref{prop1_02}) and (\ref{prop1_03}), we get if $n>n_0$ then
\begin{eqnarray*}
&&P \left [\left \{ \xi(B_i(x,a)) \geq n \right \} \cap \left \{
D(B_i(x,a)) \leq s/2 \right \} \right ] \leq E[W_2]\\
&&~~=\lambda\int_{B_i(x,a)}  P \left [ \left \{ |V_x(s)| \geq
2^{-d}n \right \} \cap \left \{ \mbox{diam}(V_x(s)) \leq
s/2 \right \} \right ]dx\\
&&~~<\lambda a^{d-i} \exp\left(-c2^{-d} n\right),
\end{eqnarray*}
together with (\ref{prop1_04}) this gives (\ref{prop1_00}).
\end{proof}

Let real numbers $s_1>2$ and $s_2>2$ be given. Let points
$x=(x_1,x_2,\ldots,x_d)\in [0,s_1/2]^d$ and
$\widetilde{x}=(\widetilde{x}_1,\widetilde{x}_2,\ldots,\widetilde{x}_d)\in
[0,s_2/2]^d$ be given. For all $1\leq j \leq d$, define
\begin{eqnarray*}
N_{x,\widetilde{x}}^j(s_1,s_2):=\left\{
\begin{array}{ll}
\min(s_1,s_2)-x_j-1, & \mbox{if } x_j=\widetilde{x}_j, \\
\min(x_j,\widetilde{x}_j,s_1-x_j-1,s_2-\widetilde{x}_j-1), &
\mbox{otherwise},
\end{array}
\right.
\end{eqnarray*}
and let
\begin{eqnarray}\label{prop1_1}
N_{x,\widetilde{x}}(s_1,s_2):=\min_{1\leq j\leq d}\lfloor
N_{x,\widetilde{x}}^j(s_1,s_2)\rfloor.
\end{eqnarray}

\begin{lem}\label{argument}
Let us assume $d \geq 2$, $\lambda>\lambda_c$, $1\leq i \leq d$ and
$0<a\leq 1$. There exist constants $c>0$ and
$n_0>0$, such that if  $x\in [0,s_1/2]^d$,
$\widetilde{x}\in [0,s_2/2]^d$ and
$N_{x,\widetilde{x}}(s_1,s_2)>n_0$ then
\begin{eqnarray*}
&&\left| E\left[\xi(B_i(x,a),s_1)\right]-
E\left[\xi(B_i(\widetilde{x},a),s_2)\right]\right|<\exp\left(-c
N_{x,\widetilde{x}}(s_1,s_2) \right).
\end{eqnarray*}
\end{lem}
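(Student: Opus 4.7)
I would prove the lemma by a translation coupling followed by a good event / bad event decomposition. Set $N:=N_{x,\widetilde x}(s_1,s_2)$, let $\pi(y):=y+(\widetilde x-x)$ be the translation carrying $x$ to $\widetilde x$, and define $W:=B_i(x,a)\oplus[-N/3,N/3]^d$. I would couple $\mathcal{H}_\lambda^{(1)}$ and $\mathcal{H}_\lambda^{(2)}$ so that $\mathcal{H}_\lambda^{(2)}\cap\pi(W)=\pi(\mathcal{H}_\lambda^{(1)}\cap W)$, and sample the two processes independently outside these windows. The coordinatewise definition of $N^j_{x,\widetilde x}$ is calibrated precisely so that in every coordinate $j$ with $x_j\ne\widetilde x_j$ the window $W$ sits at distance at least $N$ from the corresponding faces of $B(s_1)$, while in coordinates with $x_j=\widetilde x_j$ the shift is trivial in that coordinate and the shared face $\{y_j=0\}$ is automatically aligned. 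Consequently $W\cap B(s_1)$ and $\pi(W)\cap B(s_2)$ are isometric, and their interfaces with $L(s_1)$ and $L(s_2)$ correspond under $\pi$.

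\textbf{Good event and matching.} Next I would introduce the good event $\mathcal{G}$: the intersection of $\{D(B_i(x,a),s_1)\le N/6\}$, the analogous event at $\widetilde x$, and a renormalisation event certifying that $\mathcal{C}_\infty$-membership of Poisson points in $W$ is determined by $\mathcal{H}_\lambda\cap W$ alone (for instance, the event that a constant-width collar of sub-boxes just inside $W$ is entirely tiled by ``good'' blocks, each containing a cluster that will be part of the giant component of $B(s_1)$). The first two clauses, via inequality (\ref{prop1_000}), will force every component contributing to $\xi(B_i(x,a),s_1)$ or $\xi(B_i(\widetilde x,a),s_2)$ to lie wholly inside $W$; the renormalisation clause will render the indicators $\mathbf{1}_{\{y\in\mathcal{C}_\infty\}}$ for $y\in W$ measurable with respect to $\mathcal{H}_\lambda\cap W$. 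On $\mathcal{G}$, the random variables $\xi(B_i(x,a),s_1)$ and $\xi(B_i(\widetilde x,a),s_2)$ become the \emph{same} function of the coupled Poisson data in $W$, hence are equal almost surely.

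\textbf{Bad event and conclusion.} To finish, I would bound the contribution of $\mathcal{G}^c$. Inequality (\ref{prop1_000}) together with the standard Peierls bound on the renormalisation event should give $P(\mathcal{G}^c)\le e^{-c_1 N}$, while the tail estimate (\ref{prop1_00}) yields $E[\xi(B_i(x,a),s_1)^2]=O(1)$ uniformly in $s_1$ (using $s_1,s_2\ge 2N$ so that the $e^{-cs}$ correction in (\ref{prop1_00}) is negligible). Cauchy--Schwarz then gives
\[
E\!\left[\xi(B_i(x,a),s_1)\mathbf{1}_{\mathcal{G}^c}\right]+E\!\left[\xi(B_i(\widetilde x,a),s_2)\mathbf{1}_{\mathcal{G}^c}\right]\le e^{-c_2 N},
\]
and combining this with the equality of the two $\xi$'s on $\mathcal{G}$ and the triangle inequality proves the stated exponential bound.

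\textbf{The hard part.} The main technical obstacle will be the renormalisation clause of $\mathcal{G}$: $\mathcal{C}_\infty$-membership is \emph{a priori} a global property, so one must build a finite certificate inside $W$ that witnesses every $\mathcal{C}_\infty$-vertex of $W$. This is routine in supercritical continuum percolation, and closely parallels the $*$-connectivity and Peierls argument already used in the proof of Lemma~\ref{exponent2}, but some care is required to choose the certificate so that it and its image under $\pi$ are simultaneously valid in the two coupled processes; once this is arranged, the remaining estimates are bookkeeping.
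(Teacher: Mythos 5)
Your overall architecture --- couple the two configurations, exhibit a good event on which $\xi(B_i(x,a),s_1)$ and $\xi(B_i(\widetilde x,a),s_2)$ coincide, and control the bad event through the tail bounds of Lemma \ref{prop1} --- is sound, and its second half is essentially what the paper does (the paper converts $P[\xi_1\neq\xi_2]<2e^{-c(N-1)}$ into a bound on the difference of expectations using (\ref{prop1_00}) together with a Poisson truncation at $n>e^2\lambda s_1^d$; note that your claim $E[\xi^2]=O(1)$ also needs such a truncation, since the $e^{-cs}$ term of (\ref{prop1_00}) summed over all $n$ diverges, and note that $s_1,s_2\geq 2N$ is not guaranteed when $x_j=\widetilde x_j$, though $N\leq\min(s_1,s_2)$ is enough for the $e^{-cs}$ terms to be harmless). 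The genuine gap is the renormalisation clause, i.e.\ exactly the step you defer. As literally stated it is impossible: on no event of positive probability is membership of the points of $W$ in $\mathcal{C}_{\infty}$ determined by $\mathcal{H}_{\lambda}\cap W$ alone, because connection to infinity is a global property; what you can hope for is a window-measurable proxy plus an event of probability at least $1-e^{-cN}$ on which the proxy agrees with the truth simultaneously for both coupled processes, and that event has to be constructed. Moreover the certificate must decide more than $\mathcal{C}_{\infty}$-membership: $\xi$ also depends on the out-connect points, which are defined through direct connections to $\mathcal{C}_{\infty}\setminus B(s)$, and on the exclusion of the largest component $C_1$ of the whole box, which is again non-local; your good event is silent on the latter, and ruling out that a small component counted near $B_i(x,a)$ is the box's largest is precisely where Proposition 10.13 of Penrose enters in the proof of (\ref{prop1_000}). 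All of this can be done with Penrose--Pisztora-type renormalisation plus the estimates of Lemma \ref{exponent2}, but it is the crux of your plan and is missing, not bookkeeping.

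The paper sidesteps the whole difficulty by choosing a different coupling: it keeps a single realisation of $\mathcal{H}_{\lambda}$ (hence a single infinite cluster $\mathcal{C}_{\infty}$) and translates the box rather than the process, setting $B'(s_2):=B(s_2)\oplus\{x-\widetilde x\}$; stationarity gives that $\widetilde\xi(B_i(x,a),s_2)$, computed relative to $B'(s_2)$, has the same law as $\xi(B_i(\widetilde x,a),s_2)$. Since both quantities are then built from the very same $\mathcal{C}_{\infty}$, and the two boxes agree within distance roughly $N$ of $B_i(x,a)$ (the shared faces being exactly the coordinates with $x_j=\widetilde x_j$), they can differ only if some counted component reaches the symmetric difference $\Delta$ of the boxes, i.e.\ only if $D(B_i(x,a),s_1)\geq N-1$ or $\widetilde D(B_i(x,a),s_2)\geq N-1$, which (\ref{prop1_000}) bounds by $2e^{-c(N-1)}$ --- no local certificate of $\mathcal{C}_{\infty}$-membership, no treatment of out-connect points, and no largest-component clause are needed. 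If you wish to keep your plan, the cleanest repair is to switch to this box-translation coupling; otherwise you must actually build the proxy event and verify the two additional global features noted above before the ``same function of the window data'' claim is justified.
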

\begin{proof}
Let $B'(s_2):=B(s_2)\oplus\{x-\widetilde{x}\},$ and let
$\widetilde{C}_1,\widetilde{C}_2,\ldots,\widetilde{C}_{\widetilde{M}}$
denote the components of $G(\mathcal{C}_{\infty}\cap B'(s_2);1),$
taking in order of decreasing order. For any region $R\subseteq
B'(s_2)$ and $2\leq i \leq \widetilde{M}$, define
\begin{equation*}
\widetilde{\chi_{i}}(R):=\left\{
\begin{array}{ll}
1, & \mbox{if the out-connect point of $\widetilde{C}_i$ is contained by } R, \\
0, & \mbox{otherwise}.
\end{array}
\right.
\end{equation*}
Let $\widetilde{\xi}(R,s_2):=
\sum_{i=2}^{\widetilde{M}}\widetilde{\chi}_{i}(R)|\widetilde{C}_i|$
and define
\begin{eqnarray*}
\widetilde{D}(R,s_2):=\max_{2\leq j \leq
\widetilde{M},\widetilde{\chi}_{j}(R)=1} \mbox{diam}
(\widetilde{C}_j).
\end{eqnarray*}
According to the ergodicity of Poisson point processes, we can get
\begin{eqnarray}\label{argu_1}
P\left[\widetilde{\xi}\left(B_i(x,a),s_2\right)=k\right]=P\left[\xi\left(B_i(\widetilde{x},a),s_2\right)=k\right],~~~~\forall~k\geq
1.
\end{eqnarray}
\begin{figure}
\centering
\includegraphics[width=3in]{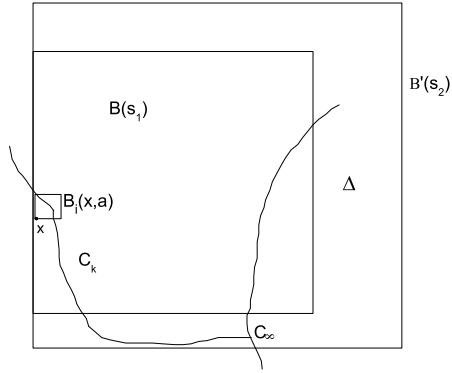}
\caption{If $C_k$ connects with $\mathcal{H}_{\lambda}\cap \Delta$,
 the event of $\xi(B_i(x,a),s_1)\neq
\widetilde{\xi}(B_i(x,a),s_2)$ may happen.}\label{Lemmagraph}
\end{figure}
Let $\Delta:=B(s_1)\cup B'(s_2)-B(s_1)\cap B'(s_2).$ If
$\xi(B_i(x,a),s_1)\neq \widetilde{\xi}(B_i(x,a),s_2)$, then there
exists at least one component among
$C_2,\ldots,C_M,\widetilde{C}_2,\ldots,\widetilde{C}_{\widetilde{M}}$
which connects directly with $\mathcal{H}_{\lambda}\cap \Delta$, see
Figure \ref{Lemmagraph}. For simplicity of exposition, we take
$N=N_{x,\widetilde{x}}(s_1,s_2)$, $\xi_1=\xi(B_i(x,a),s_1)$ and $\xi_2=\widetilde{\xi}(B_i(x,a),s_2)$. Therefore, by (\ref{prop1_000}), if $N>n_0+1$ then
\begin{eqnarray}\label{argu_2}
\begin{aligned}
P\left[\xi_1 \neq \xi_2\right]&\leq P\left[ \left\{D(B_i(x,a),s_1)\geq N-1 \right\} \cup
\left\{\widetilde{D}(B_i(x,a),s_2)\geq N-1 \right\} \right]\\
&< 2e^{-c(N-1)}.
\end{aligned}
\end{eqnarray}
Also,
\begin{eqnarray}\label{argu_2_1}
\begin{aligned}
&P\left[\left\{\xi_1=k\right\} \cap \left\{\xi_2\neq k \right\} \right]+P\left[\left\{\xi_1\neq k\right\} \cap \left\{\xi_2=k \right\} \right]\\
&~~=P\left[\left\{\xi_1=k\right\}\right]+P\left[\left\{\xi_2=k\right\}\right]-2P\left[\left\{\xi_1=k\right\} \cap \left\{\xi_2=k \right\} \right]\\
&~~\geq |P\left[\left\{\xi_1=k\right\}\right]-P\left[\left\{\xi_2=k\right\}\right]|,
\end{aligned}
\end{eqnarray}
so by (\ref{argu_2}) and (\ref{argu_2_1}) we have
\begin{eqnarray}\label{argu_3}
\begin{aligned}
&\sum_{k=1}^{\infty} \left|P\left[\left\{\xi_1=k\right\}\right]-P\left[\left\{\xi_2=k\right\}\right]\right|\\
&~~\leq \sum_{k=1}^{\infty} \left( P\left[\left\{\xi_1=k\right\} \cap \left\{\xi_1\neq \xi_2 \right\} \right]+P\left[\left\{\xi_2=k \right\} \cap \left\{\xi_1 \neq \xi_2 \right\} \right] \right)\\
&~~=P\left[\left\{\xi_1\geq 1 \right\} \cap \left\{\xi_1\neq \xi_2 \right\} \right]+P\left[\left\{\xi_2\geq 1 \right\} \cap \left\{\xi_1 \neq \xi_2 \right\} \right]<4e^{-c(N-1)}.
\end{aligned}
\end{eqnarray}
Thus, by (\ref{argu_1}) and (\ref{argu_3}) we can get
\begin{eqnarray}\label{argu_4}
\begin{aligned}
&\left| E\left[\xi_1\right]-E\left[
\xi(B_i(\widetilde{x},a),s_2)\right]\right|=\left| \sum_{n=1}^{\infty}\sum_{k=n}^{\infty}
\left(P\left[\xi_1=k\right]-P\left[
\xi_2=k \right]\right)  \right|\\
&< 4N^{d/(d-1)}e^{-c(N-1)}+ \sum_{n=N^{d/(d-1)}}^{\infty}\left(
P\left[\xi_1\geq n\right]+P\left[
\xi_2\geq n) \right]\right).
\end{aligned}
\end{eqnarray}

In the following we estimate the upper bound of
$\sum_{n=N^{d/(d-1)}}^{\infty} P[\xi_1 \geq n]$.
 Firstly, by (\ref{prop1_00}), for  $N$ large enough, we can obtain
\begin{eqnarray}\label{argu_5}
\begin{aligned}
\sum\limits_{n=N^{d/(d-1)}}^{e^2 \lambda s_1^d} P[\xi_1
\geq n]<\sum\limits_{n=N^{d/(d-1)}}^{e^2 \lambda s_1^d} \exp
\left (-c n^{(d-1)/d} \right )+ e^2 \lambda s_1^d e^{-c
s_1 }.
\end{aligned}
\end{eqnarray}
Set $\alpha:=\exp (-c N)$, then
\begin{eqnarray}\label{argu_6}
\begin{aligned}
&\sum\limits_{n=N^{d/(d-1)}}^{e^2 \lambda s_1^d} \exp \left (-c
n^{(d-1)/d} \right )=\sum\limits_{n=N^{d/(d-1)}}^{e^2 \lambda
s_1^d} \alpha^{(nN^{-d/(d-1)})^{(d-1)/d}}\\
&~~<N^{d/(d-1)}\sum\limits_{k=1}^{\infty} \alpha ^{k^{(d-1)/d}}=
N^{d/(d-1)} \alpha \sum\limits_{k=1}^{\infty}
\alpha^{k^{(d-1)/d}-1}<M N^{d/(d-1)} \alpha, \end{aligned}
\end{eqnarray}
where $M=\sum_{k=1}^{\infty} \exp(-c (k^{(d-1)/d}-1))<\infty$ is a
constant.

Secondly, by Lemma 1.2 in \cite{p10},
\begin{eqnarray}\label{argu_7}
\begin{aligned}
&\sum\limits_{n=e^2 \lambda s_1^d+1}^{\infty} P[\xi_1
\geq n] < \sum\limits_{n=e^2 \lambda s_1^d+1}^{\infty} P[Po(\lambda
s_1^d)\geq
n]\\
&~~ \leq  \sum\limits_{n=e^2 \lambda s_1^d+1}^{\infty} \exp \left (
-\left (\frac{n}{2} \right )  \log \left ( \frac{n}{\lambda s_1^d}
\right )  \right ) < \frac{e^{-(e^2 \lambda s_1^d +1)}}{1-e^{-1}}.
\end{aligned}
\end{eqnarray}
Thus, by (\ref{argu_5}), (\ref{argu_6}) and (\ref{argu_7}),   there exists a
constant $c_1>0,$ such that for large $N$,
\begin{eqnarray}\label{argu_8}
\sum\limits_{n=N^{d/(d-1)}}^{\infty} P[\xi_1 \geq n]<
e^{-c_1 N}.
\end{eqnarray}
Using the ergodicity of Poisson point processes, similarly, we can
get
\begin{eqnarray}\label{argu_9}
\sum\limits_{n=N^{d/(d-1)}}^{\infty} P[\xi_2
\geq n]< e^{-c_1 N}.
\end{eqnarray}
Combining (\ref{argu_4}), (\ref{argu_8}) and
(\ref{argu_9}) gives us the result.
\end{proof}

\begin{lem}\label{limit}
Suppose $d \geq 2$ and $\lambda>\lambda_c$. Let integer $i\in [1,d]$, and constants $a\in(0,1]$  and $x_j\in [0,\infty)$, $1\leq j \leq i$.
Define the point
$$\widetilde{x}_{s,a}=\widetilde{x}_{s,a}(x_1,\ldots,x_i):=\left(x_1,\ldots,x_i,\frac{s}{2}-a,\ldots,\frac{s}{2}-a\right)\in
\mathbb{R}^d,$$ then the limit of
$E[\xi(B_i(\widetilde{x}_{s,a},a))]$ exists and
\begin{eqnarray}\label{lim_new_00}
\lim_{s\rightarrow\infty}E[\xi(B_i(\widetilde{x}_{s,a},a))]=a^{d-i}\lim_{s\rightarrow\infty}E[\xi(B_i(\widetilde{x}_{s,1},1))].
\end{eqnarray}
Also, if $\min_{1\leq j \leq i}\{x_j\}=0$, then $\lim_{s\rightarrow\infty}E[\xi(B_i(\widetilde{x}_{s,a},a))]>0$.
\end{lem}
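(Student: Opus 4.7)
The plan is to prove the three claims -- existence of the limit, the scaling identity (\ref{lim_new_00}), and strict positivity -- using Lemma \ref{argument}, additivity of $\xi$, and Palm theory. Throughout I write $F_s(a):=E[\xi(B_i(\widetilde{x}_{s,a},a),s)]$. For existence, I would apply Lemma \ref{argument} to the pair $(\widetilde{x}_{s_1,a},\widetilde{x}_{s_2,a})$. On the first $i$ coordinates the two points agree, so $N^j_{\cdot,\cdot}\geq \min(s_1,s_2)-x_j-1$; on the last $d-i$ coordinates both sit within distance $a\leq 1$ of $s/2$, giving $N^j_{\cdot,\cdot}\geq \min(s_1,s_2)/2-2$. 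Hence $N_{\widetilde{x}_{s_1,a},\widetilde{x}_{s_2,a}}(s_1,s_2)\to\infty$, and Lemma \ref{argument} forces $\{F_s(a)\}$ to be Cauchy, so convergent to some $\phi(a)$.

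For the scaling, I would first treat $a=1/m$ with $m\in\mathbb{N}$: decompose $B_i(\widetilde{x}_{s,1},1)$ into $m^{d-i}$ pairwise-disjoint translates $B_i(y_{\mathbf{j}},1/m)$, indexed by $\mathbf{j}\in\{1,\dots,m\}^{d-i}$, whose last $d-i$ coordinates are $s/2-1+(j_k-1)/m$. Additivity of $\xi$ gives $F_s(1)=\sum_{\mathbf{j}} E[\xi(B_i(y_{\mathbf{j}},1/m),s)]$. Each $y_{\mathbf{j}}$ shares its first $i$ coordinates with $\widetilde{x}_{s,1/m}$ and has its last coordinates within distance $1$ of $s/2$, so Lemma \ref{argument} yields $|E[\xi(B_i(y_{\mathbf{j}},1/m),s)]-F_s(1/m)|<\exp(-cs)$; letting $s\to\infty$ produces $\phi(1)=m^{d-i}\phi(1/m)$. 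Tiling $B_i(\widetilde{x}_{s,k/m},k/m)$ into $k^{d-i}$ sub-boxes of side $1/m$ similarly gives $\phi(k/m)=(k/m)^{d-i}\phi(1)$ on $\mathbb{Q}\cap(0,1]$. For irrational $a$, the boxes $B_i(\widetilde{x}_{s,a},a)$ are nested in $a$, so $\phi$ is monotone non-decreasing; a monotone function agreeing with the continuous map $a\mapsto a^{d-i}\phi(1)$ on a dense subset of $(0,1]$ must equal it everywhere by a two-sided squeeze, proving (\ref{lim_new_00}).

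For strict positivity when $x_1=0$ (taken without loss of generality), I plan to use Palm theory to write $F_s(a)=\lambda\int_{B_i(\widetilde{x}_{s,a},a)} g(x,s)\,dx$, where $g(x,s)$ is the Palm expectation of $|C(x)|$ on the event that the component of $x$ in $G(\mathcal{C}_\infty\cap B(s);1)$ is not $C_1$ and has out-connect point $x$. I would exhibit a local witness at each $x$ with $x_1\in(0,1)$: no Poisson points in $(B(x,1)\setminus\{x\})\cap B(s)$ (isolating $x$ from the rest of $\mathcal{C}_\infty\cap B(s)$), at least one Poisson point $x'\in B(x,1)\setminus B(s)$ (supplying the out-edge), and $x'\in\mathcal{C}_\infty$. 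The first two are local Poisson events of positive probability uniformly in $s$, and the third carries probability $p_\infty(\lambda)>0$, with a reduced Palm computation at $x'$ decoupling the macroscopic percolation event from the local Poisson-vacuum event. On this event $\{x\}$ is a component of $\mathcal{C}_\infty\cap B(s)$ of size $1$ with out-connect point $x$, distinct from $C_1$ (whose size is of order $s^d$ by (\ref{o1})), so $g(x,s)\geq\varepsilon>0$. Integrating over $\{x\in B_i(\widetilde{x}_{s,a},a):x_1\in(0,1)\}$, of Lebesgue measure $a^{d-i}$, then yields $F_s(a)\geq\lambda\varepsilon a^{d-i}$, which survives the limit.

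The main obstacle I anticipate is the positivity step: the macroscopic event $\{x'\in\mathcal{C}_\infty\}$ must be combined with the local Poisson-vacuum event without their probabilities cancelling. The cleanest remedy is to condition on the local configuration near $x$ and apply the reduced Palm formula at the auxiliary point $x'$, after which $\{x'\in\mathcal{C}_\infty\}$ reduces to an unconditional percolation event of probability $p_\infty(\lambda)$. The scaling step, while requiring the monotonicity-plus-density squeeze to cover irrational $a$, is otherwise a routine assembly of additivity of $\xi$ and Lemma \ref{argument}.
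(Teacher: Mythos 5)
Your existence argument and your proof of the scaling identity (\ref{lim_new_00}) are fine and essentially the paper's route: the paper likewise gets existence from Lemma \ref{argument} plus Cauchy's criterion, and obtains the scaling by splitting the box along one of the last $d-i$ coordinates, using additivity of $\xi$ and Lemma \ref{argument} (via translation/stationarity) to derive $f_{x_1,\ldots,x_i}(1)=f_{x_1,\ldots,x_i}(b)+f_{x_1,\ldots,x_i}(1-b)$ and hence linearity coordinate by coordinate; your rational tiling plus the monotonicity squeeze is the same mechanism in slightly different packaging.

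The genuine gap is in the positivity step. Your witness event couples a \emph{decreasing} event (vacuum of $\mathcal{H}_{\lambda}$ in $B(x,1)\cap B(s)$) with an increasing, non-local event ($x'\in\mathcal{C}_{\infty}$), and your proposed decoupling does not work: the reduced Palm formula at $x'$ only says that, given a point at $x'$, the remaining configuration is again Poisson; it does not remove the conditioning on the vacuum, and conditionally on that vacuum the probability that $x'$ lies in the infinite cluster is \emph{not} $p_{\infty}(\lambda)$ (the vacuum deletes potential routes to infinity). FKG also runs the wrong way for a decreasing/increasing pair, so you cannot lower-bound the joint probability by the product. What you actually need is a uniform positive lower bound on the probability that $x'$ reaches infinity using only points outside the vacuumed region, and this is precisely the device the paper constructs: it surrounds $x$ by the annulus $B_x^+\setminus B_x^-$ tiled by $64$ small rectangles, and the event $A_1$ (each rectangle occupied) produces a circuit to which every point connecting directly into $B_x^-$ must attach; given $A_1$, the event $A_2$ that the circuit meets $\mathcal{C}_{\infty}$ is measurable with respect to the configuration outside $B_x^-$, so $A_1\cap A_2$ is independent of the vacuum in $B_x^-$, and the outward connection $A_3$ is combined with $A_1\cap A_2$ by FKG (both increasing) before multiplying by the vacuum probability. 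Without such a blocking construction, or an equivalent argument that the infinite cluster can be reached from $x'$ while avoiding a prescribed bounded region, your claim $g(x,s)\geq\varepsilon>0$ is unsupported. A minor further slip: no single $\varepsilon$ works uniformly over $x_1\in(0,1)$, since the Lebesgue measure of $B(x,1)\setminus B(s)$ tends to $0$ as $x_1\to 1$; either restrict to $x_1\in(0,1/2)$ or integrate the $x$-dependent bound, as the paper does with its $d_x$-dependent estimate.
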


\begin{proof}
For  $s_1$ and $s_2$ large enough, suppose $s_2>s_1$. By
(\ref{prop1_1}), it is easy to get
$N_{\widetilde{x}_{s_1,a},\widetilde{x}_{s_2,a}}(s_1,s_2)>s_1/2-2.$
Therefore by Lemma \ref{argument} and Cauchy's criterion for
convergence, the limit of $E[\xi(B_i(\widetilde{x}_{s,a},a))]$
exists as $s\rightarrow\infty$.

For any constant $b\in[0,1]$, let
\begin{eqnarray*}
y_{s,b}=y_{s,b}(x_1,\ldots,x_i):=\left(x_1,\ldots,x_i,\frac{s}{2}-1,\ldots,\frac{s}{2}-1,\frac{s}{2}-b\right)\in\mathbb{R}^d.
\end{eqnarray*}
Similarly, by Lemma \ref{argument} and the Cauchy's criterion we have
the limit of $E[\xi(B_{d-1}(y_{s,b},b))]$
exists. Define
$$f_{x_1,\ldots,x_i}(b):=\lim_{s\rightarrow\infty}E[\xi(B_{d-1}(y_{s,b},b))].$$
Since $Leb(B_{d-1}(y_{s,b},b)\cap B_{d-1}(y_{s,1},1-b))=0$, then by
the definition of $\xi$ we have
\begin{eqnarray}\label{lim_new_01}
E\left[\xi(B_{d-1}(y_{s,1},1))\right]=E[\xi(B_{d-1}(y_{s,1},1-b))]+E[\xi(B_{d-1}(y_{s,b},b))].
\end{eqnarray}
By (\ref{prop1_1}), $N_{y_{s,1},y_{s,1-b}}(s,s)>s/2-2$. Using Lemma
\ref{argument} and Cauchy's criterion we have
\begin{eqnarray*}
\lim_{s\rightarrow\infty}E[\xi(B_{d-1}(y_{s,1},1-b))]=\lim_{s\rightarrow\infty}E[\xi(B_{d-1}(y_{s,1-b},1-b))]=f_{x_1,\ldots,x_i}(1-b).
\end{eqnarray*}
Therefore, taking the limits of the both sides on (\ref{lim_new_01}), we
can get
\begin{eqnarray*}
f_{x_1,\ldots,x_i}(1)=f_{x_1,\ldots,x_i}(1-b)+f_{x_1,\ldots,x_i}(b),
\end{eqnarray*}
which indicates that $f_{x_1,\ldots,x_i}(b)=bf_{x_1,\ldots,x_i}(1)$.
With the similar method, we can get
\begin{eqnarray*}
\lim_{s\rightarrow\infty}E[\xi(B_i(\widetilde{x}_{s,a},a))]=a^{d-i}f_{x_1,\ldots,x_i}(1),
\end{eqnarray*}
which gives (\ref{lim_new_00}).

It remains to prove that $\lim_{s\rightarrow\infty}E[\xi(B_i(\widetilde{x}_{s,a},a))]>0$ if $\min_{1\leq j
\leq i}\{x_j\}=0$. For
simplicity of exposition, we restrict ourselves to the case of
$d=2$, and the proof of this result has no essential difficulty when
$d\geq 3$.

Let $\partial B(s)$ denote the boundary of $B(s)$. If $\min_{1\leq j
\leq i}\{x_j\}=0$, then $\widetilde{x}_{s,a}\in\partial B(s)$.
For $x\in B_i(\widetilde{x}_{s,a},a)$, let $d_x$ to be the Euclid
distance from $x$ to $\partial B(s)$, then $0\leq d_x\leq 1$.  Let
$V_x$ denote the connected component containing $x$ of
$G(\mathcal{H}_{\lambda,s}\cup \{x\};1)$.  Firstly, we will show
that there exists a constant $c>0$, such that
\begin{eqnarray}\label{bound_2}
\begin{aligned}
&P\left[\{|V_x|=1\} \cap \{x\in
\mathcal{C}_{\infty}\}\right]\\
&~~\geq c\left[1-\exp\left(\lambda\left(d_x\sqrt{1-d_x^2}-\arccos
d_x\right)\right)\right]p_{\infty}(\lambda).
\end{aligned}
\end{eqnarray} Define
\begin{figure}
\centering
\includegraphics[width=2in]{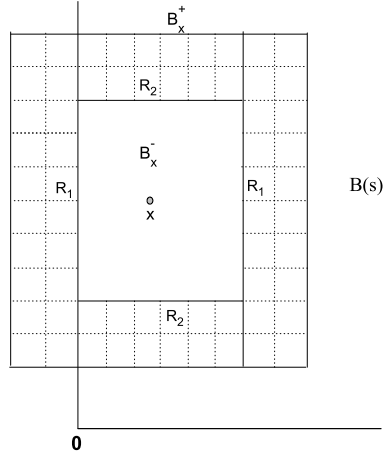}
\caption{The placements of $B_x^-,B_x^+,R_1$ and $R_2$ are
shown.}\label{box2}
\end{figure}
$B_x^-$ to be the rectangle of $(1+d_x)\times 2$ centred at $x$ and
$B_x^+$ to be the rectangle of $(\frac{7}{3}+d_x)\times
\frac{10}{3}$ centred at $x$. Divide the region of $B_x^+\backslash
B_x^-$ into 64 small rectangles with two diffrent sizes: one size
recorded $R_1$ is $\frac{1}{3}\times \frac{1}{3}$, and the other
size recorded $R_2$ is $\frac{1+d_x}{6}\times \frac{1}{3}$, see
Figure \ref{box2}. The number of small rectangles with size $R_1$ is
$40$, and the number of small rectangles with size $R_2$ is $24$.
Define $A_1$ to be the event that each of these 64 small rectangles
includes at least one point of $\mathcal{H}_{\lambda}$. By the
properties of Poisson point processes, we have
\begin{eqnarray}\label{bound_3}
\begin{aligned}
P(A_1)&=\left(1-e^{-\lambda/9}\right
)^{40}\cdot\left(1-e^{-\lambda(1+d_x)/18}\right )^{24}\\
&\geq \left(1-e^{-\lambda/9}\right
)^{40}\cdot\left(1-e^{-\lambda/18}\right )^{24}.
\end{aligned}
\end{eqnarray}
If $A_1$ happens, there exists a connected component in
$B_x^+\backslash B_x^-$ which contains all the points in these small
rectangles. Also, for any point in $\mathbb{R}^d \backslash B_x^-$
which can connect directly with a point in $B_x^-$, it must connect
directly with this connected component. Let $A_2$ denote the event
that there exists at least one point in $B_x^+\backslash B_x^-$
contained by $\mathcal{C}_{\infty}$. So according to above
discussion, the event $A_1\cap A_2$ is independent with the
distribution of the points of $\mathcal{H}_{\lambda}$ in $B_x^-$.
Therefore,
\begin{eqnarray}\label{bound_4}
P(A_1\cap A_2)=P(A_1)P(A_2|A_1) \geq P(A_1) p_{\infty}(\lambda).
\end{eqnarray}
Denote $A_3$ to be the event that there exists at least one point of
$\mathcal{H}_{\lambda}$ in $B(x;1) \cap B(s)^c$, where $B(x;1)$
denotes the $d-dimensional$ unit ball centred at point $x$. By the
properties of Poisson point processes it can be computed that
\begin{eqnarray}\label{bound_5}
P(A_3)=1-\exp\left(\lambda\left(d_x\sqrt{1-d_x^2}-\arccos
d_x\right)\right).
\end{eqnarray}
Because $A_3$ and $A_1\cap A_2$ are both increasing events in
$G(\mathcal{H}_{\lambda};1)$, by FKG inequality (Theorem 2.2 in
\cite{p12}) we have
\begin{eqnarray}\label{bound_6}
P(A_3\cap A_1 \cap A_2) \geq P(A_3)P(A_1 \cap A_2).
\end{eqnarray}
If the event $A_3\cap A_1 \cap A_2$ happens, it must be true that
$x\in \mathcal{C}_{\infty}$. Also, the event $A_3$ is independent
with the distribution of the points of $\mathcal{H}_{\lambda}$ in
$B_x^-$, so we have
\begin{eqnarray}\label{bound_6_temp}
\begin{aligned}
P\left[\{|V_x|=1\} \cap \{x\in \mathcal{C}_{\infty}\}\right] &\geq
P[A_3\cap A_1 \cap A_2 \cap \{\mathcal{H}_{\lambda} \cap B_x^-
= \emptyset\}]\\
&=e^{-2(1+x)\lambda}P(A_3\cap A_1 \cap A_2)\\
&\geq e^{-4\lambda}P(A_3\cap A_1 \cap A_2).
\end{aligned}
\end{eqnarray}
Set $c:=e^{-4\lambda}\cdot \left(1-e^{-\lambda/9}\right
)^{40}\cdot\left(1-e^{-\lambda/18}\right )^{24}$, together with
(\ref{bound_3}), (\ref{bound_4}), (\ref{bound_5}), (\ref{bound_6})
and (\ref{bound_6_temp}) we can get (\ref{bound_2}).

Let $W$ denote the number of the points of $\mathcal{H}_{\lambda}
\cap B_i(\widetilde{x}_{s,a},a)$ which belong to
$\mathcal{C}_{\infty}$ but are isolated in $B(s)$. By the definition
of $\xi(B_i(\widetilde{x}_{s,a},a))$ and Palm theory for Poisson
processes, we have
\begin{eqnarray*}
E[\xi(B_i(\widetilde{x}_{s,a},a))]\geq E[W]=\lambda
\int_{B_i(\widetilde{x}_{s,a},a))} P\left[\{|V_x|=1\} \cap \{x\in
\mathcal{C}_{\infty}\}\right] dx.
\end{eqnarray*}
Combining this with (\ref{bound_2}), we can get
$E[\xi(B_i(\widetilde{x}_{s,a},a))]>\frac{1}{2}c\left(1-e^{(1-\pi)\lambda/4}\right)\lambda
p_{\infty}(\lambda).$ Our result follows.

\end{proof}

\begin{proof}[Proof of Theorem \ref{t1}]
For simplicity of exposition, we shall prove (\ref{order_t1_00}) only in the case of $d=3$, and this proof
has no essential difficulty in the case of $d=2$ or $d\geq 4$.

Let $\eta_{ij}(s):=E\left[\xi\left([0,1]\times [i,i+1]\times [j,j+1],s \right)\right]$ and take $n=\lfloor \frac{s}{2} \rfloor$. By symmetry we have $\eta_{ij}(s)=\eta_{ji}(s)$, and therefore
\begin{eqnarray}\label{t1_1}
\begin{aligned}
E\left[\xi\left([0,1]\times[0,n]^2\right)\right]&=\sum_{i=0}^{n-1}\sum_{j=0}^{n-1}\eta_{ij}(s)\\
&=\eta_{00}(s)+\sum_{k=1}^{n-1}\left(2\sum_{i=0}^{k-1}\eta_{ik}(s)+\eta_{kk}(s) \right).
\end{aligned}
\end{eqnarray}
Set $$a_1(s):=\eta_{00}(s)+\sum_{k=1}^{n-1}\left(2\sum_{i=0}^{k-1}\left(\eta_{ik}(s)-\eta_{i,n-1}(s)\right)+\eta_{kk}(s)-\eta_{k,n-1}(s) \right),$$
then for large $s$ and $s_2$ satisfying $s_2>s$, by Lemma \ref{argument} we have
\begin{eqnarray}\label{t1_2}
\begin{aligned}
&|a_1(s)-a_1(s_2)|<2n^2 e^{-cs/2}\\
&~~~~~~~~+\sum_{k=n}^{n_2-1}\left(2\sum_{i=0}^{k-1}|\eta_{ik}(s_2)-\eta_{i,n_2-1}(s_2)|+|\eta_{kk}(s_2)-\eta_{k,n_2-1}(s_2)| \right)\\
&<2n^2 e^{-cs/2}+ \sum_{k=n}^{n_2-1}\left(2\sum_{i=0}^{k-1}e^{-ck}+e^{-ck}\right)=o\left(e^{-cs/3}\right),
\end{aligned}
\end{eqnarray}
where $n_2=\lfloor\frac{s_2}{2} \rfloor$ and $c$ is the same constant appearing in Lemma \ref{argument}. Then by Cauchy's criterion
the limit of $a_1(s)$ exists.

Define the point $y_i=(0,i,n)\in \mathbb{R}^3$. For any $i\in [0,n-1]$ and large $s$, using Lemmas \ref{argument} and \ref{limit} we can get
\begin{eqnarray}\label{t1_3}
\begin{aligned}
&\big|E\left[\xi\left(B_{2}\left(y_{i},{\textstyle  \frac{s}{2}}-n\right)\right)\right]-({\textstyle  \frac{s}{2}}-n)\eta_{i,n-1}(s)\big|\\
&\leq \big|E\left[\xi\left(B_{2}\left(y_{i},{\textstyle  \frac{s}{2}}-n\right)\right)\right]-({\textstyle  \frac{s}{2}}-n)E\left[\xi\left([0,1]\times[i,i+1]\times[{\textstyle  \frac{s}{2}}-1,{\textstyle  \frac{s}{2}}]\right)\right]\big|\\
&~~~~~~~~+({\textstyle  \frac{s}{2}}-n)\big|E\left[\xi\left([0,1]\times[i,i+1]\times[{\textstyle  \frac{s}{2}}-1,{\textstyle  \frac{s}{2}}]\right)\right]-\eta_{i,n-1}(s)\big|\\
&=o\left(e^{-cs/3}\right).
\end{aligned}
\end{eqnarray}
Similarly, we can get
\begin{eqnarray}\label{t1_4}
E\left[\xi\left([0,1]\times\left[n,\frac{s}{2}\right]^2\right)\right]=\left(\frac{s}{2}-n\right)^2\eta_{n-1,n-1}(s)+o\left(e^{-cs/3}\right).
\end{eqnarray}
We recall that
$R_0=[0,1]\times [0,s/2]^2$, then together with (\ref{t1_1}), (\ref{t1_2}), (\ref{t1_3}) and (\ref{t1_4}),
\begin{eqnarray}\label{t1_5}
\begin{aligned}
E\left[\xi\left(R_0\right)\right]&=E\left[\xi\left([0,1]\times[0,n]^2\right)\right]+2\sum_{i=0}^{n-1}E\left[\xi\left(B_{2}\left(y_{i},\frac{s}{2}-n\right)\right)\right]\\
&~~~~~~~~~~~~+E\left[\xi\left([0,1]\times\left[n,\frac{s}{2}\right]^2\right)\right]\\
&=\sum_{k=1}^{n-1}\left(2\sum_{i=0}^{k-1}\eta_{i,n-1}(s)+\eta_{k,n-1}(s) \right)+\left(s-2n\right)\sum_{i=0}^{n-1}\eta_{i,n-1}(s)\\
&~~~~~~~~~~~~+\left(\frac{s}{2}-n\right)^2\eta_{n-1,n-1}(s)+a_1+o\left(e^{-cs/3}\right),
\end{aligned}
\end{eqnarray}
where $a_1:=\lim_{s\rightarrow\infty}a_1(s)$. Let $b_i(s):=\eta_{i,n-1}(s)-\eta_{n-1,n-1}(s)$, then by (\ref{t1_5}) we have
\begin{eqnarray}\label{t1_6}
\begin{aligned}
E\left[\xi\left(R_0\right)\right]
&=\left(\frac{s^2}{4}-1\right)\eta_{n-1,n-1}(s)+\sum_{k=1}^{n-1}\left(2\sum_{i=0}^{k-1}b_i(s)+b_k(s) \right)\\
&~~~~~~~~~~~~+\left(s-2n\right)\sum_{i=0}^{n-1}b_i(s)+a_1+o\left(e^{-cs/3}\right)\\
&=\left(\frac{s^2}{4}-1\right)\eta_{n-1,n-1}(s)+s\sum_{i=0}^{n-2}b_i(s)-2b_0(s)-\sum_{i=1}^{n-2}(2i+1)b_i(s)\\
&~~~~~~~~~~~~+a_1+o\left(e^{-cs/3}\right).
\end{aligned}
\end{eqnarray}
Set
\begin{eqnarray*}
a_2(s):=\sum_{i=0}^{n-2}b_i(s)~~~\mbox{and}~~~a_3(s):=2b_0(s)+\sum_{i=1}^{n-2}(2i+1)b_i(s).
\end{eqnarray*}
With the similar argument as (\ref{t1_3}), we can get that the exist constants $a_2$ and $a_3$ such that
\begin{eqnarray*}
|a_2(s)-a_2|<3ne^{-cs/2}~~~\mbox{and}~~~|a_3(s)-a_3|=o\left(e^{-cs/3}\right).
\end{eqnarray*}
Also, by Lemmas \ref{argument} and the Cauchy's criterion, there exists a constant $a_0>0$ such that
$$|\eta_{n-1,n-1}(s)-a_0|<e^{-c(n-1)}.$$ Taking $a_0,a_2$ and $a_3$ into (\ref{t1_6}) we have
\begin{eqnarray*}
E\left[\xi\left(R_0\right)\right]=\left(\frac{s^2}{4}-1\right)a_0+s a_2-a_3+a_1+o\left(e^{-cs/3}\right).
\end{eqnarray*}
with the similar argument as above, there exist constants $a_4, a_5, a_6$ and $a_7$, such that
\begin{eqnarray*}
E\left[\xi\left(R_1\right)\right]=\frac{s^2}{4}a_0+s a_4+a_5+o\left(e^{-cs/3}\right),
\end{eqnarray*}
and
\begin{eqnarray*}
E\left[\xi\left(R_2\right)\right]=\frac{s^2}{4}a_0+s a_6+a_7+o\left(e^{-cs/3}\right).
\end{eqnarray*}
Combined these with (\ref{comm_6}), (\ref{total_1}) and Lemma \ref{temp1}, (\ref{order_t1_00}) has been deduced,
where $\tau_1=6a_0>0$.

With the results of Theorem 10.22 and Theorem 11.16 (which shows that
$\delta>0$) in \cite{p10}, (\ref{order_t1_00}) is followed by
(\ref{order_t1_01}).
\end{proof}

\begin{proof}[Proof of Theorem \ref{t2}]
Given the discussion in the proof of Theorem 11.16 in \cite{p10},
(2.45) in \cite{p10} is followed by
\begin{eqnarray*}
\left(n/\lambda\right)^{-1/2}
\left(L_1\left(G\left(\mathcal{X}_n;(n/\lambda)^{-1/d}\right)\right)-E[L_1(G(\mathcal{H}_{\lambda,s};1))]\right)
 \xrightarrow{D} \mathcal {N}(0,\delta^2),
\end{eqnarray*}
where $s=(n/\lambda)^{1/d}$. Combining this and (\ref{order_t1_00}) our
result follows.
\end{proof}

\section{Some Applications}
Our method used in the proof of Theorem \ref{t1} can be applied to
estimate the expectation of many other random variables restricted
to a box $B$ as $B$ becomes large, for example, the size of the
biggest \emph{open cluster} for percolation, the coverage area of
the largest component for Poisson Boolean model, the number of open
clusters or connected components for percolation and Poisson Boolean
model, the number of open clusters or connected components with
order $k$ for percolation and Poisson Boolean model, the final size
of a spatial epidemic mentioned in \cite{p10} and so on. We will
give the similar results as  Theorem \ref{t1} for the size of the
biggest open cluster and the number of open clusters for site
percolation but the method can be adapted to bond percolation.

Following Chapter 1 of \cite{p13}, let
$\mathbb{L}^d=(\mathbb{Z}^d,\mathbb{E}^d)$ denote the integer
lattice with vertex set $\mathbb{Z}^d$ and edges $\mathbb{E}^d$
between all vertex pairs at an $l_1$-distance of 1. For $d\geq 2$ we
take $X =(X_x,x\in \mathbb{Z}^d)$ to be a family of i.i.d. Bernoulli
random variables with parameter $p\in (0,1)$. Sites
$x\in\mathbb{Z}^d$ with $X_x = 1(0)$ are denoted \emph{open}
(\emph{closed}). The corresponding probability measure of on
$\{0,1\}^{\mathbb{Z}^d}$ is denoted by $P_p$. The open clusters are
denoted by the connected components of the subgraph of
$\mathbb{L}^d$ induced by the set of open vertices. Let
$C_{\textbf{0}}$ denote the open cluster containing the origin. The
percolation probability is $\theta(p)=P_p(|C_{\textbf{0}}|=\infty)$
 and the critical probability is
 $p_c=p_c(d):=\sup\{p:\theta(p)=0\}.$
 It is well known \cite{p13} that $p_c\in(0,1)$. If $p>p_c$, by Theorem 8.1 in \cite{p13}, with probability $1$
there exists exactly one infinite open cluster
$\mathcal{C}_{\infty}$.

Given integer $n>0$, we denote by open clusters in $B(n)$ the
connected components of the subgraph of the integer lattice
$\mathbb{L}^d$ induced by the set of open vertices lying in $B(n)$.
Similar results as Theorem \ref{t1} concerned with the order of the
biggest open cluster in $B(n)$ can be given as follows.

\begin{thm}\label{t3}
Suppose $d\geq 2$ and $p\in (p_c,1)$. Let $H(X;B(n))$ be the order
of the biggest open cluster in $B(n)$. Then there exist constants
$c=c(d,p)>0$ and $\tau_i=\tau_i(d,p)$, $1\leq i \leq d$, with $\tau_1>0$,
such that for all large enough $n$,
\begin{eqnarray}\label{t3_00}
E_p[H(X;B(n-1))]=\theta(p)n^d-\sum_{i=1}^d\tau_i n^{d-i}+o\left(e^{-c n}\right).
\end{eqnarray}
Also,  there exists a constant $\sigma=\sigma(d,p) > 0$, such that
\begin{eqnarray}\label{t3_01}
H(X;B(n-1))n^{-d/2}-  \theta(p) n^{d/2}+\sum_{i=1}^{\lfloor\frac{d}{2}\rfloor}\tau_i n^{d/2-i}
 \xrightarrow{D} \mathcal {N}(0,\sigma^2)
\end{eqnarray}
as $n\rightarrow\infty$.
\end{thm}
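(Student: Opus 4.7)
The plan is to mirror the proof of Theorem \ref{t1}, replacing continuum percolation inputs with their Bernoulli percolation analogs. Fix $p>p_c$ and let $\mathcal{C}_\infty$ denote the (a.s.\ unique) infinite open cluster in $\mathbb{L}^d$. Let $C_1,C_2,\ldots,C_M$ denote, in decreasing order of cardinality, the connected components of the graph induced on $\mathcal{C}_\infty\cap B(n-1)$ by the open edges of $\mathbb{L}^d$. For $i\geq 2$, each $C_i$ contains at least one site lying in the boundary layer $L(n):=B(n-1)\setminus[1,n-2]^d$ which is adjacent (in $\mathbb{L}^d$) to an open site of $\mathcal{C}_\infty$ outside $B(n-1)$; the one closest to $\partial B(n-1)$ is taken as the out-connect point of $C_i$. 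Define $\chi_i(R)$, $\xi(R)=\sum_{i=2}^M\chi_i(R)|C_i|$ and $D(R)$ exactly as in \eqref{chidef}--\eqref{prop1_1}.

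First, I would establish the discrete analog of Lemma \ref{temp1}: for some $c>0$,
\[
0\leq E_p[H(X;B(n-1))]-E[|C_1|]\leq e^{-cn}.
\]
The proof follows the continuum argument verbatim, using the well-known supercritical estimates from Grimmett \cite{p13}: (a) the probability that $H(X;B(n-1))<(1-\varepsilon)\theta(p)n^d$ decays faster than $\exp(-cn^{d-1})$ (large-deviation bounds of Pisztora type), and (b) $\sum_{k\geq K}P_p[|C_{\mathbf 0}|=k]\leq \exp(-c K^{(d-1)/d})$ for $K$ large. Translation invariance of $P_p$ replaces the Palm formula and yields $E[\sum_{i=1}^M|C_i|]=\theta(p)n^d$ in place of \eqref{comm_6}, reducing the problem to $E[\sum_{i=2}^M|C_i|]=E[\xi(B(n-1))]$ and its decomposition into orthants and strips $R_0,\ldots,R_{d-1}$.

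Second, I would prove the lattice versions of Lemmas \ref{exponent2}, \ref{prop1}, \ref{argument}, and \ref{limit}. The required inputs are all standard for Bernoulli site percolation when $p>p_c$: finite open clusters have exponentially decaying diameter ($P_p[n\leq\mathrm{diam}(V_x)<\infty]\leq e^{-cn}$) and stretched-exponentially decaying volume ($P_p[|V_x|\geq k,\,|V_x|<\infty]\leq \exp(-ck^{(d-1)/d})$), the FKG inequality holds, and the product measure $P_p$ is ergodic under lattice translations. These replace the Peierls/block-argument inputs from \cite{p10} that were used in the continuum case. The comparison step of Lemma \ref{argument}, which replaces $B(s_1)$ with a shifted $B(s_2)$, now reads as a coupling argument based on the fact that the two configurations agree off the symmetric difference $\Delta$, and any discrepancy requires an anomalous large cluster reaching $\Delta$. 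For the positivity $\tau_1>0$ (the analog of the last part of Lemma \ref{limit}), I would produce, by a local surgery on configurations near a face of $B(n-1)$, a positive-probability event on which a boundary site is open, isolated from all other sites of $B(n-1)$, yet connects to the infinite cluster through an open neighbor immediately outside the box; the FKG inequality combines this with the unconditional event $\{x\in\mathcal{C}_\infty\}$ to give a uniform positive lower bound.

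Third, once these lemmas are in hand, the bookkeeping at the end of the proof of Theorem \ref{t1} goes through unchanged: writing $E[\xi(R_i)]$ as a polynomial in $n$ of degree $d-1-i$ (with coefficients given by the limits supplied by Lemma \ref{limit}) and summing over $i$ produces the expansion $E_p[H(X;B(n-1))]=\theta(p)n^d-\sum_{i=1}^d\tau_i n^{d-i}+o(e^{-cn})$, with $\tau_1$ explicitly positive by the previous step. Finally, for \eqref{t3_01}, I would invoke the known central limit theorem for the size of the largest open cluster in a supercritical Bernoulli percolation box (the lattice counterpart of Theorem 10.22 in \cite{p10}, available via martingale/Pisztora renormalization arguments) to obtain the CLT centered at $E_p[H(X;B(n-1))]$, and then use \eqref{t3_00} to replace that centering by the polynomial $\theta(p)n^d-\sum_{i=1}^{\lfloor d/2\rfloor}\tau_i n^{d-i}$, absorbing the higher-order terms into the $o(1)$. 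The main obstacle is the careful verification of the lattice analogs of Lemmas \ref{exponent2}--\ref{argument} with the correct uniform constants; all the required ingredients are present in the percolation literature, but they must be assembled so that the exponential error terms survive the multiple summations.
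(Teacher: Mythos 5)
Your proposal follows essentially the same route as the paper: reduce to the boundary defect $E[\sum_{i\geq 2}|C_i|]$ via the lattice analog of Lemma \ref{temp1}, define out-connect points and a site-indexed $\xi$ on the boundary layer $L(n-1)$, transport Lemmas \ref{exponent2}--\ref{limit} to Bernoulli percolation using the standard supercritical tail bounds, FKG and ergodicity, and then do the same slab decomposition $R_0,\ldots,R_{d-1}$ and bookkeeping, finally invoking the known CLT for the largest open cluster (the paper cites Theorem 3.2 of Penrose 2001) and recentering by \eqref{t3_00}. The only differences are cosmetic (the paper indexes $\xi$ by lattice vertices and picks the lexicographically smallest out-connect point, and in the positivity step one should, as in the continuum Lemma \ref{limit}, use a local shield so that the isolation requirement is independent of the connection to $\mathcal{C}_\infty$ rather than applying FKG to events of opposite monotonicity), so the proposal is correct in substance.
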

\begin{proof}
Similar to the above, $E_p[|\mathcal {C}_{\infty}\cap
B(n-1)|]=\theta(p)n^d$. Let $C_1,C_2,...,C_M$ denote the components
of $\mathcal {C}_{\infty}\cap B(n-1)$, taken in a decreasing order.
Let $L(n-1)=B(n-1)\backslash[1,n-2]^d$. For any $2\leq i \leq M$,
since $C_i\subset \mathcal {C}_{\infty}$, therefore there exists at
least one point in $L(n-1)\cap C_i$ which connects to $\mathcal
{C}_{\infty}$ directly; we choose the smallest one according to the
lexicographic ordering on $\mathbb{Z}^d$ as the $out-connect$
$point$. For any $x\in \mathbb{Z}^d \cap L(n-1)$, define
\begin{equation*}
\xi(x):=\left\{
\begin{array}{ll}
|C_i|, &\mbox{if there exists } i\in[2,M] \mbox{ such that $x$ is the out-connect point of $C_i$}, \\
0, & \mbox{otherwise},
\end{array}
\right.
\end{equation*}
Also, for integer $j\in[0,d-1]$, let
\begin{eqnarray*}
R_j:=\left([0,1]\times [0,n-1]^{d-1-j} \times [1,n-2]^j \right) \cap \mathbb{Z}^d,
\end{eqnarray*}
then $$E\left[\sum_{i=2}^M |C_i|\right]=\sum_{x\in \mathbb{Z}^d \cap L(n-1)
}E[\xi(x)]=2\sum_{j=0}^{d-1}\sum_{x\in R_j
}E[\xi(x)].$$ With the similar process as the proof of Theorem
\ref{t1},  (\ref{t3_00}) can be deduced, where $$\tau_1=2d\lim_{n\rightarrow\infty}E\left[\xi\left(\left(0,\lfloor\frac{n}{2}\rfloor,\ldots,\lfloor\frac{n}{2}\rfloor\right)\right)\right]>0.$$ Using Theorem 3.2
in \cite{penrose2001}, (\ref{t3_01}) follows.
\end{proof}

Following Chapter 1.5 of \cite{p13}, we define the \emph{number of
open clusters per vertex} by
$$\kappa(p)=E_p(|C_{\textbf{0}}|^{-1})=\sum_{n=1}^{\infty}\frac{1}{n}P_p(|C_{\textbf{0}}|=n),$$
with the convention that $1/\infty=0$. Similar results as Theorem
\ref{t1} concerning with the number of the open clusters in $B(n)$
can also be given as follows.

\begin{thm}\label{t4}
Suppose $d\geq 2$ and $p\in (0,p_c)\cup (p_c,1)$. Let $H(X;B(n))$ be
the number of the open clusters in $B(n)$. Then there exist
constants $c=c(d,p)>0$ and $\tau_i=\tau_i(d,p)>0$, $1\leq i \leq d$, with $\tau_1>0$,
 such that for all large enough $n$,
\begin{eqnarray}\label{t4_00}
E_p[H(X;B(n-1))]=\kappa(p)n^d+\sum_{i=1}^d \tau_i n^{d-i}+o\left(e^{-cn}\right).
\end{eqnarray}
Also,  there exists a constant $\sigma=\sigma(d,p) > 0$, such that
\begin{eqnarray}\label{t4_01}
H(X;B(n-1))n^{-d/2}-  \kappa(p) n^{d/2}- \sum_{i=1}^{\lfloor \frac{d}{2} \rfloor}\tau_i n^{d/2-i}
 \xrightarrow{D} \mathcal {N}(0,\sigma^2)
\end{eqnarray}
as $n\rightarrow\infty$. Moreover, for any constant $\varepsilon\in(0,d/2)$,
\begin{eqnarray}\label{t4_03}
\begin{aligned}
&P_p\left(\frac{H(X;B(n-1))-\kappa(p)n^d-\sum_{i=1}^d \tau_i n^{d-i}}{\mbox{Var}(H(X;B(n-1)))}\leq x \right)\\
&=\int_{-\infty}^x \frac{1}{\sqrt{2\pi}} e^{-y^2/2}dy+o\left(n^{-\frac{d}{2}+\varepsilon}  \right),
\end{aligned}
\end{eqnarray}
where Var$(\cdot)$ denotes the variance.
\end{thm}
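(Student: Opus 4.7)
The plan is to mirror the strategy of Theorems \ref{t1} and \ref{t3}. Starting from the representation $H(X;B(n-1)) = \sum_{x \in B(n-1) \cap \mathbb{Z}^d} \mathbf{1}\{x\text{ open}\}/|C(x;B(n-1))|$, where $C(x;B(n-1))$ denotes the open cluster of $x$ in the induced subgraph on $B(n-1)\cap\mathbb{Z}^d$, I would take expectations and, writing $\hat C(x)$ for the full-lattice open cluster of $x$, use the identity $\kappa(p) = E_p[\mathbf{1}\{|\hat C(x)|<\infty\}/|\hat C(x)|]$ together with translation invariance to split $E_p[H(X;B(n-1))] = \kappa(p)\,n^d + E_p[\Xi(n)]$, where $\Xi(n)$ collects the differences $\mathbf{1}\{x\text{ open}\}/|C(x;B(n-1))| - \mathbf{1}\{|\hat C(x)|<\infty\}/|\hat C(x)|$. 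Each summand of $\Xi(n)$ vanishes unless $\hat C(x)$ fails to be contained in $B(n-1)$; so by the standard exponential decay of finite-cluster diameters for $p\ne p_c$ (the lattice analogue of Lemma \ref{exponent2}), combined with the exponential localisation of an interior vertex's cluster relative to $\mathcal C_\infty\cap B(n-1)$ in the supercritical case, the interior contribution to $E_p[\Xi(n)]$ is exponentially small in the distance to $\partial B(n-1)$.

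The boundary contribution is then organized exactly as in Theorem \ref{t1}: I would rewrite $E_p[\Xi(n)]$ as a sum of contributions indexed by unit boxes in the boundary shell $L(n-1)$, introduce a lattice $\xi$-functional analogous to (\ref{xidef}) but weighting each out-connect cluster by $1$ rather than by its cardinality, decompose $B(n-1)$ by symmetry into $2^d$ octants, and within each octant further partition into the face/edge/$\ldots$/corner-type regions $R_0,\ldots,R_{d-1}$ of Theorem \ref{t1}. Lattice analogues of Lemmas \ref{argument} and \ref{limit} then show that the per-unit-box expectation stabilizes exponentially with depth in the boundary shell; collecting terms by codimension yields the polynomial expansion $E_p[\Xi(n)] = \sum_{i=1}^d \tau_i n^{d-i} + o(e^{-cn})$, which is (\ref{t4_00}). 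Strict positivity $\tau_1>0$ is obtained as in the second half of Lemma \ref{limit}, by exhibiting a local configuration of positive probability in which a boundary vertex is open but disconnected inside $B(n-1)$ from every other open vertex, producing a cluster that would not be counted if the box were replaced by all of $\mathbb{Z}^d$.

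For the central limit theorem (\ref{t4_01}) I would invoke Theorem 3.2 of \cite{penrose2001} (a CLT for translation-invariant stabilizing functionals of Bernoulli site percolation), which applies to the $1/|C|$-weighted counting functional because cluster diameters decay exponentially for $p\ne p_c$; composing with (\ref{t4_00}) yields the stated Gaussian limit. The Berry--Esseen-type rate (\ref{t4_03}) would follow from a quantitative version of the same CLT, where Stein's method applied to the exponential stabilization radius of the $1/|C|$ functional delivers a Kolmogorov-distance bound of order $n^{-d/2}$, which is $o(n^{-d/2+\varepsilon})$ for every $\varepsilon\in(0,d/2)$. The principal obstacles I anticipate are (i) cleanly absorbing the $\sim 1$ contribution of the unique supercritical infinite cluster, whose restriction to $B(n-1)$ is a single giant component of size $\sim\theta(p)\,n^d$, into the $\tau_i$ expansion without spoiling the $o(e^{-cn})$ remainder; and (ii) extracting (or establishing) the quantitative Stein--Chen bound for the number of open clusters in a box, since \cite{penrose2001} supplies only the qualitative limit theorem.
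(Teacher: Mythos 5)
Your overall strategy (represent $H$ via $\sum_x |C_x(B(n-1))|^{-1}$, compare with $\kappa(p)n^d$, localise the correction to the boundary shell, stabilise as in Theorems \ref{t1} and \ref{t3}, then quote a CLT) is the same as the paper's, but two steps as you describe them would fail. First, the per-vertex differences are \emph{not} exponentially small in the distance to $\partial B(n-1)$ when $p>p_c$: every interior vertex $x$ lying in $\mathcal{C}_\infty$ contributes $|C_x(B(n-1))|^{-1}-0>0$, and these $\Theta(n^d)$ small terms sum to an order-one quantity, so your claimed interior bound is false as stated (you half-notice this in obstacle (i), but you also misdescribe the situation: $\mathcal{C}_\infty\cap B(n-1)$ is not a single giant component, it splits into the giant piece plus surface-order many small pieces, and it is exactly these pieces that feed $\tau_1$). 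Second, your boundary functional ``weighting each out-connect cluster by $1$'' is not the exact correction. Grouping the per-vertex differences by box-cluster $C$ with full-lattice cluster $\widehat C$ gives weight $1-|C|/|\widehat C|$ per box-cluster (equal to $1$ only when $\widehat C$ is infinite); the fractional weights coming from \emph{finite} clusters straddling $\partial B(n-1)$ are part of the answer for $p>p_c$ and are the \emph{entire} correction for $p<p_c$, so a weight-$1$ count computes a different quantity and does not yield \eqref{t4_00}. The paper resolves both problems at once with the identity $\sum_{x\in C_y(B(n-1))}\bigl(|C_y(B(n-1))|^{-1}-|C_y|^{-1}\bigr)=1-|C_y(B(n-1))|/|C_y|$, attributing this weight to an ``indicated vertex'' in $L(n-1)$, after which the whole correction is exactly a sum over boundary-shell vertices of a functional whose expectation stabilises by the exponential decay of finite-cluster diameters (Theorems 6.1 and 8.18 of \cite{p13}); this treats the subcritical and supercritical phases uniformly and keeps the $o(e^{-cn})$ remainder.

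There is also a gap in your treatment of \eqref{t4_03}: you assert that Stein's method applied to the stabilisation radius ``delivers'' a Kolmogorov bound of order $n^{-d/2}$, which is precisely the nontrivial ingredient you would need to establish, and you concede in obstacle (ii) that you do not have it. The paper does not prove such a bound from scratch; it invokes the quantitative martingale CLT for percolation cluster counts (Theorem 2.1 of \cite{JJP2010}) together with Theorem 3.1 of \cite{penrose2001} and \eqref{t4_00} (note also that the CLT used here is Theorem 3.1, not 3.2, of \cite{penrose2001}). As written, your proposal leaves both the exact boundary decomposition and the convergence-rate input unproven.
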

\begin{proof}
Let $L(n-1)=B(n-1)\backslash[1,n-2]^d$. For any $x\in B(n-1)\cap
\mathbb{Z}^d$, let $C_x$ denote the open cluster including $x$, and
let $C_x(B(n-1))$ denote the open cluster including $x$ in $B(n-1)$.
Then $C_x(B(n-1))\subseteq C_x$. For all open clusters $C$ in
$B(n-1)$, if $C\cap L(n-1)\neq \emptyset,$ according to the
lexicographic ordering on $\mathbb{Z}^d$ we choose the smallest
element of $C\cap L(n-1)$ as the \emph{indicated vertex} of $C$.
 For any $x\in \mathbb{Z}^d \cap L(n-1)$,  define
\begin{equation*}
\xi(x,B(n-1)):=\left\{
\begin{array}{ll}
1-\frac{|C_x(B(n-1))|}{|C_x|}, &\mbox{if $x$ is the idicated vertex of $C_x(B(n-1))$}, \\
0, & \mbox{otherwise}.
\end{array}
\right.
\end{equation*}
Noted that for any $y\in  \mathbb{Z}^d \cap B(n-1)$, $$\sum_{x\in
C_y(B(n-1))} \left(|C_y(B(n-1))|^{-1}-|C_y|^{-1}\right)=1-
\frac{|C_y(B(n-1))|}{|C_y|},$$ then by (4.7) in \cite{p13}, we have
\begin{eqnarray}\label{t4_1}
\begin{aligned}
H(X;B(n-1))&=\sum_{x\in \mathbb{Z}^d \cap B(n-1)}
|C_x(B(n-1))|^{-1}\\
&= \sum_{x\in \mathbb{Z}^d \cap B(n-1)} |C_x|^{-1}+ \sum_{x\in
\mathbb{Z}^d \cap L(n-1)} \xi(x,B(n-1)).
\end{aligned}
\end{eqnarray}
Therefore, take the expectation for the both sides of (\ref{t4_1}),
we can get $$E_p[H(X;B(n-1))]=\kappa(p)n^d + \sum_{x\in \mathbb{Z}^d
\cap L(n-1)} E_p[\xi(x,B(n-1))].$$

Suppose $1\leq i \leq d$ and  $x_j\in [0,K/2-1]\cap \mathbb{Z}$ for
$1\leq j \leq i$. For large integers $n_1,n_2$, let
$x=(x_1,\ldots,x_i,\lfloor \frac{n_1}{2} \rfloor, \ldots,\lfloor
\frac{n_1}{2} \rfloor) \in \mathbb{Z}^d$ and
$\widetilde{x}=(x_1,\ldots,x_i,\lfloor \frac{n_2}{2} \rfloor,
\ldots,\lfloor \frac{n_2}{2} \rfloor ) \in \mathbb{Z}^d$. Set
$\widetilde{B}(n_2):=B(n_2)\oplus\{x-\widetilde{x}\}.$ Since $\xi$
is stationary under translations of the lattice $\mathbb{L}^d$, then
$\xi(\widetilde{x},B(n_2))$ and $\xi(x,\widetilde{B}(n_2))$ have the
same distribution function. However, let
$n_0=\min\{\lfloor\frac{n_1}{2}\rfloor,\lfloor\frac{n_2}{2}\rfloor\}$,
by the definition of $\xi$ we have
\begin{eqnarray*}
&&P_p\left[\xi(x,B(n_1))\neq \xi(x,\widetilde{B}(n_2)) \right]=P_p\left[\xi(x,B(n_1))\neq \xi(x,\widetilde{B}(n_2)),C_x\neq C_\infty \right]\nonumber\\
&&~~\leq P_p\left[\mbox{diam}(C_x)\geq n_0,C_x\neq C_\infty\right]<
e^{-cn_0},
\end{eqnarray*}
where the last inequality follows from Theorem 6.1 of \cite{p13} for
$p<p_c$ and Theorem 8.18 of \cite{p13} for $p>p_c$ respectively.
Thus,
\begin{eqnarray*}
&&\left|E_p\left[\xi(x,B(n_1))\right]-E_p\left[\xi(\widetilde{x},B(n_2))\right]\right|\\
&&~~\leq \sum_{t} t \left|
P_p\left[\xi(x,B(n_1))=t\right]-P_p\left[\xi(x,\widetilde{B}(n_2))=t\right]
\right|\\
&&~~\leq \sum_{t} \left( P_p\left[\xi(x,B(n_1))=t,\xi(x,B(n_1))\neq
\xi(x,\widetilde{B}(n_2))
\right] \right.\\
&&~~~~~~~~~~\left.+P_p\left[\xi(x,\widetilde{B}(n_2))=t,\xi(x,B(n_1))\neq
\xi(x,\widetilde{B}(n_2))\right]
\right)\\
&&~~=2P_p\left[\xi(x,B(n_1))\neq \xi(x,\widetilde{B}(n_2))
\right]<2e^{-c n_0}.
\end{eqnarray*}
Therefore, $\lim_{n\rightarrow\infty}E_p[\xi(x,B(n)]$ exists. In
fact, a similar result as Theorem \ref{argument} can be deduced. Let
\begin{eqnarray*}
\widetilde{\tau}_i(K)={{d}\choose{i}}\sum_{x_j\in [0,K-1]\cup
[n-K,n-1],1\leq j\leq i }
\lim_{n\rightarrow\infty}E_p\left[\xi\left(\left(x_1,\ldots,x_i,\left\lfloor
\frac{n}{2} \right\rfloor,\ldots,\left\lfloor \frac{n}{2}
\right\rfloor\right)\right)\right],
\end{eqnarray*}
and let $\tau_i(K)=\sum_{j=1}^i \widetilde{\tau}_j(K)
{{d-j}\choose{i-j}} (-2K)^{i-j}.$ In a similar way, (\ref{t4_00})
is obtained.

Combining (\ref{t4_00}) with Theorem 3.1 in
\cite{penrose2001}, (\ref{t4_01}) follows immediately.

By Theorem 2.1 in \cite{JJP2010}, Theorem 3.1 in
\cite{penrose2001} and  (\ref{t4_00}), (\ref{t4_03}) can be deduced.
\end{proof}

It is worth noting that our results do have significance for some
practical applications. In fact, the initial motivation of this
paper is to provide theoretical foundation and guidance for the
design of \emph{wireless multihop networks}. The wireless multihop
networks, e.g., vehicular ad hoc networks, mobile ad hoc networks,
and wireless sensor networks, typically consists of a group of
decentralized and self-organized nodes that communicate with each
other in a peer-to-peer manner over wireless channels, and are
increasingly being used in military and civilian applications
\cite{p16}. The large scale wireless multihop networks are usually
formulated by the random geometric graphs, and the size of the
largest component is a fundamental variable for a network, which
plays a key role for the topology control in wireless multihop
networks.  However, this variable can not be described very
precisely by both former theoretic results and even computer
simulations as the scale of the network grows to very large.
Theorem \ref{t1} and  Theorem \ref{t2} provides a precise estimation for this
variable respectively. Using simulations the approximative values of the parameters $p_{\infty}(\lambda)$, $\tau_i$, $\sigma$ and $\delta$ can be
obtained, and thus the expression of the asymptotic size of the largest component can be well established,
which has guiding significance to the topology control in wireless multihop
networks.






\acks This research was Supported by the National Natural Science
Foundation of China under Grants No. 61203141 and 71271204, and the Innovation Program
of the Chinese Academy of Sciences under Grant No. kjcx-yw-s7.

%
%
%
%

\end{document}